\numberwithin{equation}{section}
\numberwithin{figure}{section}
\theoremstyle{plain}
\newtheorem{thm}{\protect\theoremname}[section]
  \theoremstyle{plain}
  \theoremstyle{plain}
  \newtheorem{prop}[thm]{\protect\propositionname}
  \theoremstyle{plain}
  \newtheorem{lemma}[thm]{Lemma}
  \newtheorem{claim}[thm]{Claim}
\theoremstyle{definition}
\newtheorem{defn}[thm]{Definition}
\def\XXint#1#2#3{{\setbox0=\hbox{$#1{#2#3}{\int}$}
\vcenter{\hbox{$#2#3$}}\kern-.5\wd0}}
\numberwithin{equation}{section}
\newcommand{\ra}{\rightarrow}
\newcommand{\bey}{\begin{eqnarray*}}
\newcommand{\eey}{\end{eqnarray*}}
\newcommand{\ba}{\begin{align}}
\newcommand{\ea}{\end{align}}
\newcommand{\bea}{\begin{align*}}
\newcommand{\ena}{\end{align*}}
\newcommand{\be}{\begin{equation}}
\newcommand{\ee}{\end{equation}}
\newcommand{\R}{\mathbb R}
\newcommand{\Z}{\mathbb Z}
\newcommand{\C}{\mathbb C}
\newcommand{\N}{\mathbb N}
\newcommand{\Q}{\mathbb Q }
\newcommand{\bc}{\begin{center}}
\newcommand{\ec}{\end{center}}
\setlist{itemsep = 5pt}
\providecommand{\theoremname}{Theorem}
  \providecommand{\corollaryname}{Corollary}
  \providecommand{\propositionname}{Proposition}
\providecommand{\theoremname}{Theorem}
\begin{document}

\author{Adam Mair }

\address{Department of Mathematics \\
 Bucknell University \\
 Lewisburg, PA 17837, USA}

\email{adam.mair@bucknell.edu}

\title[$L^p$-to-$L^q$ Boundedness of Commutators of the Cauchy Transform]{$L^p$-to-$L^q$ Boundedness for Commutators of the Cauchy Transform}

\begin{abstract}
In this paper we prove a characterization of the $L^p$-to-$L^q$ boundedness of commutators to the Cauchy transform.  Our work presents both new results and new proofs for established results.  In particular, we show that the Campanato space characterizes boundedness of commutators for a certain range of $p$ and $q$.
\end{abstract}

\maketitle

\section{Introduction}
A common question in harmonic analysis is when an operator is bounded between normed vector spaces.  We say a classical operator $T$ satisfies the $L^p\text{-to-}L^q$ strong-norm inequality, denoted $T:L^p(\R^n) \ra L^q(\R^n)$, if there exists a $C>0$ such that
\[ \left( \int_{\R^n} |Tf(x)|^q\,dx\right)^\frac{1}{q} \leq C \left( \int_{\R^n}|f(x)|^p\,dx\right)^\frac{1}{p}. \]
We begin with an introduction to the main objects of study and a short discussion of prior results that motivated this project.  The integral operator of focus is the Cauchy transform:
\[ C(f)(z) = \int_{\C} \frac{f(\zeta)}{z-\zeta}\,dA(\zeta). \]
The Cauchy transform is an important object of study in complex analysis and differential equations.  Wolff uses the Cauchy transform in his proof of the Corona problem, see \cite{MR2261424}.  This transform is also significant for its use in the study of quasiconformal mappings, see \cite{MR2472875}.  Our results are based on commutators to the Cauchy transform:
\[ C_b f(z) = (bC)f(z) - C(bf)(z) = \int_{\C} \frac{b(z) - b(\zeta)}{z-\zeta}f(\zeta)\,dA(\zeta), \]
where $b \in L_{\text{loc}}^1(\C)$.  An important question in the study of this operator is the relationship between our locally integrable function $b$ and the properties of its associated commutator.  These include residing in the $BMO$ space and satisfying strong norm-inequalities.  This has drawn recent attention in weighted and unweighted settings, see \cite{MR4357114}, \cite{MR4706933}, and \cite{mair2022bumpconditionsgeneraliterated}.  Some of the first results were by Coifman, Rochberg, and Weiss in \cite{MR0412721}.  Building on these results, Hyt\"{o}nen in \cite{MR4390233} and \cite{MR4338459} proves a full characterization of commutators to Calder\'{o}n-Zygmund operators.  Taking inspiration from Hyt\"{o}nen's elegant result, we show the following characterization on commutators to the Cauchy transform.

\begin{thm}\label{MainResult}
    Let $b \in L^1_\text{loc}(\C)$, and suppose $p$ and $q$ satisfy $ 1 < p < 2 < q$, where $p' \neq q$.  Given the quantity
    \[ \frac{\beta}{2} = \frac{1}{p} - \frac{1}{q} - \frac{1}{2}, \]
    then $C_b:L^p(\C) \ra L^q(\C)$ if, and only if,
    \begin{enumerate}
        
        \item $\beta > 1$ and $b$ is a constant;

        \item $\beta \in (0,1]$ and $b$ is H\"{o}lder continuous with exponent $\beta$;

        \item $\beta = 0$ and $b \in BMO(\C)$;

        \item $\beta < 0$ and $b \in \mathscr{L}^{\phi,\lambda}$, where $\phi = \max\{p',q\}$ and $\frac{1}{\phi}\left(1 - \frac{\lambda}{2} \right) = -\frac{\beta}{2}$.
    \end{enumerate}
\end{thm}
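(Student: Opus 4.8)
The plan rests on the observation that the Cauchy transform is an order‑$1$ fractional singular integral on $\R^{2}\cong\C$: since $|K(z,\zeta)|=|z-\zeta|^{-1}$ with $n=2$, $C$ is comparable in modulus to the Riesz potential $I_{1}$, so $C\colon L^{p}(\C)\to L^{q_{0}}(\C)$ whenever $\tfrac1{q_{0}}=\tfrac1p-\tfrac12$. Writing $\tfrac\beta2=\bigl(\tfrac1p-\tfrac12\bigr)-\tfrac1q=\tfrac1{q_{0}}-\tfrac1q$, the regimes $\beta<0$, $\beta=0$, $\beta>0$ are precisely $q<q_{0}$, $q=q_{0}$, $q>q_{0}$; since $1<p<2<q$ forces $\beta\in(-1,1)$, item (1) does not occur under the hypotheses and is recorded only to complete the dichotomy (when $b$ is constant $C_{b}\equiv0$).

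\emph{Sufficiency.} For $0<\beta<1$, homogeneous H\"older continuity gives $|C_{b}f(z)|\le\|b\|_{\dot{C}^{0,\beta}}\int_{\C}|z-\zeta|^{\beta-1}|f(\zeta)|\,dA(\zeta)\lesssim\|b\|_{\dot{C}^{0,\beta}}I_{\beta+1}(|f|)(z)$, and $I_{\beta+1}\colon L^{p}\to L^{q}$ by Hardy--Littlewood--Sobolev, precisely because $\tfrac1q=\tfrac1p-\tfrac{\beta+1}2$ with $0<\beta+1<2$. For $\beta\le0$ I would instead use a sparse domination: for $f$ bounded and compactly supported,
\[
|C_{b}f(z)|\ \lesssim\ \sum_{Q\in\mathcal S}\ell(Q)\,|b(z)-\langle b\rangle_{3Q}|\,\langle|f|\rangle_{3Q}\,\chi_{Q}(z)\ +\ \sum_{Q\in\mathcal S}\ell(Q)\,\bigl\langle|b-\langle b\rangle_{3Q}|\,|f|\bigr\rangle_{3Q}\,\chi_{Q}(z)
\]
for a sparse family $\mathcal S$, obtained from the usual stopping-time construction applied to the kernel $\tfrac{b(z)-b(\zeta)}{z-\zeta}$, the weight $\ell(Q)=\ell(Q)^{\alpha}$ recording the fractional order $\alpha=1$. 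Inserting the Campanato oscillation estimate $\langle|b-\langle b\rangle_{3Q}|^{\phi}\rangle_{3Q}^{1/\phi}\lesssim\|b\|_{\mathscr L^{\phi,\lambda}}\ell(Q)^{\beta}$ (which carries exactly this power by the relation $\tfrac1\phi(1-\tfrac\lambda2)=-\tfrac\beta2$) reduces both forms to model fractional sparse operators of order $1+\beta=2\bigl(\tfrac1p-\tfrac1q\bigr)$, whose $L^{p}\to L^{q}$ boundedness requires, for the ``$b$-inside'' form, a strict inequality $\phi'<p$. This holds with $\phi=\max\{p',q\}=q$ when $q>p'$; when $q<p'$ one passes instead to the adjoint $C_{b}^{*}\colon L^{q'}\to L^{p'}$ (same fractional shape, kernel $\tfrac1{\bar z-\bar\zeta}$), where the required inequality becomes $q'>p$, i.e.\ $q<p'$. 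The value $p'=q$ is exactly the one on which both the direct and the adjoint estimates sit, which is why it is excluded. For $b\in BMO$ the exponent $\phi$ is immaterial and one uses John--Nirenberg, while for $\beta\ge1$ (not occurring here) $b$ is constant.

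\emph{Necessity.} Assume $C_{b}\colon L^{p}(\C)\to L^{q}(\C)$. Fix a square $Q$ and choose a sibling $\tilde Q$ with $\ell(\tilde Q)=\ell(Q)$ and $\operatorname{dist}(Q,\tilde Q)=R\,\ell(Q)$ for a large absolute $R$; then for all $z\in Q$, $\zeta\in\tilde Q$ the value $\tfrac1{z-\zeta}$ lies in a small sector about a fixed $w_{0}^{-1}$ with $|w_{0}|\asymp\ell(Q)$, and $\bigl|\tfrac1{z-\zeta}-w_{0}^{-1}\bigr|\lesssim R^{-1}|w_{0}|^{-1}$. Testing against $\chi_{\tilde Q}$,
\[
C_{b}(\chi_{\tilde Q})(z)=\frac{|\tilde Q|}{w_{0}}\bigl(b(z)-\langle b\rangle_{\tilde Q}\bigr)+E(z),\qquad |E(z)|\lesssim R^{-1}\frac{|\tilde Q|}{|w_{0}|}\Bigl(|b(z)-\langle b\rangle_{\tilde Q}|+\langle|b-\langle b\rangle_{\tilde Q}|\rangle_{\tilde Q}\Bigr);
\]
for $R$ large the first error is absorbed, the second is disposed of by running the step with $Q,\tilde Q$ interchanged, and since $|Q|=|\tilde Q|=\ell(Q)^{2}$ and $\tfrac2p-\tfrac2q-1=\beta$ one obtains $\bigl(\tfrac1{|Q|}\int_{Q}|b-\langle b\rangle_{Q}|^{q}\bigr)^{1/q}\lesssim\|C_{b}\|_{p\to q}\,\ell(Q)^{\beta}$. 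Applying the same argument to $C_{b}^{*}\colon L^{q'}\to L^{p'}$ (the conjugate-exponent identity gives the same $\beta$) yields the companion bound with exponent $p'$, hence $\langle|b-\langle b\rangle_{Q}|^{\phi}\rangle_{Q}^{1/\phi}\lesssim\|C_{b}\|_{p\to q}\,\ell(Q)^{\beta}$ uniformly in $Q$, $\phi=\max\{p',q\}$. For $\beta=0$ this is the John--Nirenberg description of $BMO$; for $\beta<0$, using $\ell(Q)=|Q|^{1/2}$ and $\tfrac1\phi+\tfrac\beta2=\tfrac\lambda{2\phi}$, it rewrites as $|Q|^{-\lambda/(2\phi)}\|b-\langle b\rangle_{Q}\|_{L^{\phi}(Q)}\lesssim\|C_{b}\|_{p\to q}$, i.e.\ $b\in\mathscr L^{\phi,\lambda}$; for $0<\beta<1$ the bound $\langle|b-\langle b\rangle_{Q}|\rangle_{Q}\lesssim\ell(Q)^{\beta}$ forces $b\in\dot{C}^{0,\beta}$ by the Campanato--Meyers theorem (which, for $\beta\ge1$, would force $b$ constant).

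I expect the necessity half to be the real obstacle. The three delicate points are: the complex-valued kernel, which makes ``$C_{b}f$ is large'' meaningless without the sector selection of the sibling square and hence rests on the non-degeneracy $\tfrac1{z-\zeta}\approx w_{0}^{-1}$; the removal of the kernel-variation error $E(z)$ by a bootstrap/absorption that does not spend the sharp power $\ell(Q)^{\beta}$; and the upgrade of a single-exponent mean-oscillation bound to membership in $\mathscr L^{\phi,\lambda}$ at the exact exponent $\phi=\max\{p',q\}$, which needs both the original and the adjoint testing together with a careful reconciliation of $\tfrac1\phi(1-\tfrac\lambda2)=-\tfrac\beta2$ with the Campanato normalization. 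By comparison the sufficiency side is comparatively routine given the fractional-integral structure of $C$, its only genuinely sharp feature being the direct-versus-adjoint dichotomy that isolates the forbidden case $p'=q$.
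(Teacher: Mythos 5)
Your sufficiency plan for $\beta\le 0$ has a genuine gap at exactly the point where the paper's new work lies. After your sparse domination, the two bilinear forms must be estimated against $f\in L^p$ and a dual function $g\in L^{q'}$, and the only information on $b$ is the Campanato bound at the single exponent $\phi=\max\{p',q\}$. H\"older on the cube then forces an $L^{\phi'}$ average of whichever function shares the variable with the oscillation. In the case $q>p'$ (so $\phi=q$, $\phi'=q'$) the ``$b$-outside'' form puts the $L^{q'}$ average on $g$, and the fractional maximal argument needs the strict inequality $q'>\phi'$, which fails (one only has weak-type bounds at the critical exponent); in the case $q<p'$ (so $\phi=p'$, $\phi'=p$) the same criticality hits the ``$b$-inside'' form, since it needs $p>\phi'=p$. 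Passing to the adjoint does not help: the adjoint produces the same pair of bilinear forms with $f$ and $g$ interchanged, so in either case exactly one of your two forms sits at the critical exponent. Thus the sharp difficulty is not the ``direct-versus-adjoint dichotomy'' you identify but is internal to the two-form sparse reduction itself. The paper's proof is built to avoid it: by duality it fixes $u\in L^{q'}$ and applies Lerner's atomic decomposition (Calder\'on--Zygmund cubes of $u$ at heights $a^k$), so that $u$ enters only through first-order averages $u_{Q_j^k}$ over a sparse family; in the term where the oscillation sits at the outer variable it does \emph{not} dominate the Cauchy transform away, but keeps $Cf$ and uses Kolmogorov's inequality together with $C:L^1\to L^{2,\infty}$ (possible because $\phi'<2$) to convert $\|Cf\|_{L^{\phi'},Q}$ into a first-order average of $f$. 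In this scheme the exponent $\phi'$ only ever lands on $f$, where the strict inequality $p>\phi'$ (equivalent to $p'\neq q$) is available. Note also that separating the oscillation from both $f$ and $g$ by John--Nirenberg-type arguments costs an exponent bump, which is precisely why the paper's Theorem \ref{supplementary_sufficiency_result} requires $r>\phi$ (or an Orlicz bump) rather than $\phi$ itself; so the gap cannot be waved away as routine.

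Your necessity argument is a genuinely different route from the paper's and is viable in outline: you use the classical non-degeneracy scheme (a far sibling square $\tilde Q$, the kernel nearly constant there, absorption of the error), whereas the paper exploits an exact algebraic cancellation special to the Cauchy kernel, testing against $\chi_Q$ and the linear function $g_Q(x)=\frac{x-c}{|Q|}\chi_Q(x)$ so that $\frac{b(x)-b(y)}{x-y}\cdot(x-y)=b(x)-b(y)$, which yields the $L^1$- and $L^\phi$-oscillation bounds with no error term, no separated squares and no absorption, and then invokes the Campanato/Meyers-type dictionary exactly as you do. What your version buys is generality (it would apply to any non-degenerate kernel of this fractional order); what it costs is the bookkeeping you partly elide: $|w_0|\asymp R\,\ell(Q)$ rather than $\ell(Q)$, and the coupled absorption of the second error term requires a priori finiteness, so one should first run the argument at the $L^1$-oscillation level (finite since $b\in L^1_{\mathrm{loc}}$) and only then upgrade to the $L^{q}$ and $L^{p'}$ oscillation bounds.
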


Several parts of our characterization are previously established results.  For the sufficiency results, our proof of the case $\beta = 0$ provides a new proof of the celebrated result of Chanillo in \cite{MR0642611}.  Our results in the difficult range $\beta < 0$ are new, and the appearance of the Campanato space $\mathscr{L}^{\phi,\lambda}$ presents a surprising departure from the results of Hyt\"{o}nen on Calder\'{o}n-Zygmund operators.

We note that our range $1 < p < 2 < q$ follows the natural smoothing operator property of the Cauchy transform, which satisfies the strong norm inequality $C:L^p \ra L^q$ whenever $\frac{1}{p} - \frac{1}{q} = \frac{1}{2}$ and the weak norm inequality $C:L^1 \ra L^{2,\infty}$, see \cite{MR3243741}.  However for sufficiency in the range $\beta \leq 0$ our technique requires a separation between $p'$ and $q$.  In the case that $p'=q$ our proof of Theorem \ref{MainResult} can be modified to present a partial characterization in the $\beta \leq 0$ case, where for $r$ and $\lambda^*$ satisfying $\frac{1}{r}(1 - \frac{\lambda^*}{2}) = -\frac{\beta}{2}$ we have that
\begin{enumerate}[label = $\roman*)$]
    \item if $b \in \mathscr{L}^{r,\lambda^*}$ for any $r > \phi$ then $C_b:L^p(\C) \ra L^q(\C);$

    \item if $C_b:L^p(\C) \ra L^q(\C)$ then $b \in \mathscr{L}^{\phi,\lambda}$.
\end{enumerate}
We could not verify if a full characterization cannot be achieved in the $p'=q$ case.  In section \ref{partialresult_section} we improve the sufficiency result in the $\beta < 0$ case using a simplified version of the generalized Orlicz-Morrey space, $\mathcal{O}^{A,\beta + 2}$, which satisfies $\mathcal{O}^{A,\beta + 2} \subseteq \mathscr{L}^{r,\lambda^*}$ for all $r > \phi$.

\begin{thm}\label{supplementary_sufficiency_result}
    Let $b \in L^1_\text{loc}(\C)$ and $ 1 < p < 2$.  Given any $\delta > 0$, the Young function $A(t) = t^{p'}\log(e+t)^{(1+\delta)\frac{p'}{p}}$, and the quantity
    \[ \frac{\beta}{2} = \frac{1}{p} - \frac{1}{p'} - \frac{1}{2}, \]
    if $\beta \leq 0$ and $b \in \mathcal{O}^{A,\beta + 2}$, then $C_b:L^p(\C) \ra L^q(\C)$.
\end{thm}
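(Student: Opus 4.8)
The plan is to deduce Theorem \ref{supplementary_sufficiency_result} from the already-established sufficiency direction of Theorem \ref{MainResult} (case 4, the $\beta<0$ regime, or its $p'=q$ analogue $i)$) by showing that the generalized Orlicz--Morrey space $\mathcal{O}^{A,\beta+2}$ embeds into the relevant Campanato space. Concretely, with $q$ determined by $\frac1q=\frac1{p'}$ — so that $p'=q$ and the exponent $\phi=\max\{p',q\}=p'$ — I would first observe that $\frac{\beta}{2}=\frac1p-\frac1{p'}-\frac12$ is exactly the defining relation, and that $\beta\le 0$ here is equivalent to $p\le\frac43$ (since $\frac1p-\frac1{p'}=\frac2p-1$). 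I then fix $\lambda^*$ by $\frac1{p'}\bigl(1-\frac{\lambda^*}{2}\bigr)=-\frac{\beta}{2}$ and claim $\mathcal{O}^{A,\beta+2}\subseteq\mathscr L^{r,\lambda^*}$ for every $r>p'$, so that part $i)$ of the $p'=q$ partial characterization applies and yields $C_b:L^p(\C)\to L^q(\C)$.

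The main work is the embedding $\mathcal{O}^{A,\beta+2}\subseteq\mathscr L^{r,\lambda^*}$. The Orlicz--Morrey norm controls, on each disc $B$, the normalized Luxemburg average $\|b-b_B\|_{A,B}$ against $|B|^{(\beta+2)/2}$ (up to the appropriate normalization built into $\mathcal{O}^{A,\beta+2}$); with $A(t)=t^{p'}\log(e+t)^{(1+\delta)p'/p}$, the Luxemburg norm $\|\cdot\|_{A,B}$ dominates the $L^r$-average $\bigl(\aver{B}|b-b_B|^r\bigr)^{1/r}$ for any $r<p'$ trivially, but we need it for $r>p'$ — wait: the point is the reverse, that the $L^\phi$-average in the Campanato norm with $\phi=p'$ is controlled by the sharper $A$-average. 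Since $A(t)/t^{p'}=\log(e+t)^{(1+\delta)p'/p}\to\infty$, we have $A(t)\gtrsim t^{p'}$ for large $t$, hence $\|g\|_{L^{p'}(B),\,\mathrm{avg}}\lesssim \|g\|_{A,B}$ by the standard comparison of Luxemburg norms for ordered Young functions. Therefore on each $B$,
\[
\left(\aver{B}|b-b_B|^{p'}\right)^{1/p'}\;\lesssim\;\|b-b_B\|_{A,B}\;\lesssim\;|B|^{\beta/2}\;=\;|B|^{\frac1{p'}(1-\frac{\lambda}{2})}\cdot|B|^{-1/p'+\dots},
\]
and matching exponents via $\frac1{p'}(1-\frac{\lambda}{2})=-\frac\beta2$ shows $b\in\mathscr L^{p',\lambda}=\mathscr L^{\phi,\lambda}$. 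To land inside $\mathscr L^{r,\lambda^*}$ for $r>\phi$ rather than merely $r=\phi$, I would exploit the genuine $\log$-gain in $A$: the Orlicz bound gives not just an $L^{p'}$ estimate but an $L^{p'}(\log L)^{(1+\delta)/p}$ estimate, and a Hölder/interpolation argument (John--Nirenberg-type, or simply that $L^{p'}(\log L)^\epsilon\hookrightarrow L^r$-averages lose only a logarithmic factor which is absorbed by choosing $\delta$ in terms of $r-p'$) upgrades this to the $L^r$-average controlled by $|B|^{\beta/2}$, i.e. $b\in\mathscr L^{r,\lambda^*}$ for all $r>\phi$.

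With the embedding in hand, the theorem follows immediately: by part $i)$ of the $p'=q$ partial characterization discussed after Theorem \ref{MainResult}, $b\in\mathscr L^{r,\lambda^*}$ for some (indeed every) $r>\phi$ implies $C_b:L^p(\C)\to L^q(\C)$. I would also remark that this genuinely strengthens the sufficiency clause of Theorem \ref{MainResult} in the borderline $p'=q$ case, since $\mathcal{O}^{A,\beta+2}$ is strictly larger than $\bigcup_{r>\phi}\mathscr L^{r,\lambda^*}$ is small — more precisely $\mathcal{O}^{A,\beta+2}\subseteq\bigcap_{r>\phi}\mathscr L^{r,\lambda^*}$ but is not contained in $\mathscr L^{\phi,\lambda}$ in general, so the Orlicz--Morrey hypothesis is the natural sharp endpoint.

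The step I expect to be the main obstacle is the quantitative control of the $L^r$-average ($r>\phi$) by the Orlicz average with the logarithmic weight: one must verify that the exponent $(1+\delta)\frac{p'}{p}$ on the log is exactly what is needed so that a self-improvement (in the spirit of John--Nirenberg for Campanato/$BMO$-type oscillation, or a direct layer-cake estimate on $|\{x\in B:|b(x)-b_B|>t\}|$) converts the Luxemburg bound into an $L^r$ bound with the \emph{same} power of $|B|$, uniformly over all discs $B$. Getting the bookkeeping of exponents consistent — tying together $\delta$, $r$, $\lambda^*$, and $\beta$ — is where the care is required; the rest is a clean citation of the sufficiency half of Theorem \ref{MainResult}.
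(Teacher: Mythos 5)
Your reduction to part $i)$ of the $p'=q$ partial characterization hinges on the embedding $\mathcal{O}^{A,\beta+2}\subseteq\mathscr{L}^{r,\lambda^*}$ for some $r>\phi=p'$, and that embedding is false: the inclusion runs the other way. Since $A(t)=t^{p'}\log(e+t)^{(1+\delta)p'/p}$ satisfies $t^{p'}\lesssim A(t)\lesssim t^{r}$ for every $r>p'$, Proposition \ref{YoungFuncts_ineq} gives $\|b-b_Q\|_{L^{p'},Q}\lesssim\|b-b_Q\|_{A,Q}\lesssim\|b-b_Q\|_{L^r,Q}$, hence $\mathscr{L}^{r,\lambda^*}\subseteq\mathcal{O}^{A,\beta+2}\subseteq\mathscr{L}^{\phi,\lambda}$ (this is exactly the computation at the start of Section \ref{partialresult_section}; the sentence in the introduction stating the inclusion the other way is inconsistent with it). So the Orlicz--Morrey hypothesis is \emph{weaker} than membership in any $\mathscr{L}^{r,\lambda^*}$ with $r>\phi$, which is precisely why Theorem \ref{supplementary_sufficiency_result} improves statement $i)$ rather than following from it. The proposed rescue --- upgrading the $L^{p'}(\log L)$-type oscillation bound to an $L^r$ bound with $r>p'$ by a John--Nirenberg or interpolation argument --- cannot work: exponential self-improvement of oscillation is a feature of the $\beta=0$ ($BMO$) and H\"older ranges, whereas in the Morrey range $\beta<0$ the spaces $\mathscr{L}^{r,\lambda^*}$ with common scaling but different $r$ are genuinely distinct, and a logarithmic gain over $L^{p'}$ never yields $L^r$ integrability for any $r>p'$. (Your closing remark has the inclusions backwards for the same reason, and note also that $\beta\le 0$ forces $p\ge 4/3$, not $p\le 4/3$.)

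The paper's proof does not pass through any Campanato space; it runs the duality/sparse argument of Theorem \ref{MainResult} directly at the Orlicz level. After Lemma \ref{Fractional_linearcombo} and duality reduce matters to the dyadic operator, the oscillation term $\fint_Q |b-b_Q|\,g$ is estimated by $\|b-b_Q\|_{A,Q}\|g\|_{\overline{A},Q}$ via the generalized H\"older inequality (Proposition \ref{Holder_YoungFuncts}), the factor $[b]_{A,\beta+2}$ is pulled out, Claims \ref{SparseLemma} and \ref{SubdividingDyadicCube_Bound} pass to a sparse family, and the remaining sum is dominated by $\int_{\C} M_{\eta,\overline{A}}g \cdot M_{\gamma}f$. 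The whole point of the log-bump in $A$ is that the associate function satisfies $\overline{A}(t)\simeq t^{p}/\log(e+t)^{1+\delta}\in B_{p,s'}$, so the fractional Orlicz maximal operator is bounded (Cruz-Uribe--Moen) exactly where the pure power $t^{p}$ would fail at the endpoint $p'=q$. If you want to prove the theorem, you need to reproduce this mechanism (or something equivalent); there is no route through $\mathscr{L}^{r,\lambda^*}$ with $r>\phi$.
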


Our results suggest a similar result for Riesz potentials,
\[ I_\alpha f(x) = \int_{\R^n} \frac{f(y)}{|x-y|^{n-\alpha}}\,dy, \]
however hurdles were encountered when attempting to achieve similar results.  Additionally, we take as a future project the study of a characterization of iterated and generalized commutators of classical operators.

\section{Preliminaries}

Given a cube $Q$, we write
\[ \|f\|_{L^p, Q} = \left( \frac{1}{|Q|} \int_Q |f(x)|^p\,dx\right)^\frac{1}{p} = \left( \fint_Q |f|^p \right)^\frac{1}{p} \]
for the normalized $L^p$ norm on $Q$;  if $p=1$ we use the notation $f_Q$.  Given any finite measure space, it is well known that the $L^p$ spaces satisfy reverse inclusion.  In particular, for $p \leq q$ then $\|f\|_{L^p,Q} \leq C_Q \|f\|_{L^q, Q}$.  Kolmogorov's inequality shows how the weak spaces fit into this picture:
\begin{defn}[Kolmogorov's Inequality]
    Let $(X,\mu)$ be a finite measure space and assume $0 < p < q < \infty$.  Then
    \[ \int_X |f(x)|^p\,d\mu \leq \frac{q}{q-p}\mu(X)^{1 - \frac{p}{q}}\|f\|^p_{L^{q,\infty}(\mu)}. \]
\end{defn}

Define the fractional maximal operator $M_\alpha f(x)$ by
\[ M_\alpha f(x) = \sup_{Q \ni x} \frac{1}{|Q|^{1 - \frac{\alpha}{n}}}\int_Q |f(y)|\,dy. \]
This maximal operator satisfies the same strong norm inequalities as the Riesz potential.  In particular, if $\frac{\alpha}{n} = \frac{1}{p} - \frac{1}{q}$, then $M_\alpha:L^p(\R^n) \ra L^q(\R^n)$.

Given $\mathcal{D}$ a dyadic grid, $b \in L^1_\text{loc}(\C)$, and $f$ a non-negative function, we define the dyadic fractional operator (introduced in \cite{MR2918187}).
\[ C_b^\mathcal{D}f(x) = \sum_{Q \in \mathcal{D}} \left( |Q|^{\frac{1}{2}}\fint_Q \left| b(x) - b(y) \right|f(y)dy \right)\chi_Q(x). \]
The following definition and theorems are often called the `one third trick'.
\begin{defn}
    Given $t \in \left\{0, \frac{1}{3} \right\}^n$, define
    \[ D^t = \{2^k([0,1) + m + (-1)^kt) \,:\, k \in \Z, m \in \Z^n\}. \]
\end{defn}
It is easy to verify that each $D^t$ is a dyadic grid.  The collection of dyadic grids $D^t$ has the property that every cube is contained in some dyadic cube $Q \in D^t$.  Furthermore our dyadic cube is not too much larger than the original cube.

\begin{thm}\label{one_third_trick}
    Given any cube $Q \subseteq \R^n$ there exists a $t \in \left\{0, \frac{1}{3}\right\}^n$ and $Q_t \in D^t$ such that $Q \subseteq Q_t$ and $l(Q_t) \leq 6l(Q)$.
\end{thm}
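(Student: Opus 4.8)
The plan is to treat the problem one coordinate at a time, after committing to a single common scale $2^k$, and to exploit the one elementary fact that separates $D^0$ from $D^{1/3}$: at any fixed generation, each point of the lattice $2^k\Z$ sits at distance at least $2^k/3$ from both endpoints of the $D^{1/3}$-interval of that generation containing it. First I would fix the scale: with $\ell = l(Q)$, choose $k \in \Z$ so that $3\ell \le 2^k \le 6\ell$, which is possible because the interval $[3\ell,6\ell]$ has ratio $2$ and hence contains a power of $2$. The cube produced will have side $2^k$, so $l(Q_t) = 2^k \le 6\,l(Q)$ is automatic, while $\ell \le 2^k/3$ is exactly the slack exploited below. (If $3\ell = 2^k$, replace $k$ by $k+1$; then $2^{k+1} = 6\ell$ still satisfies the bound, and now $\ell < 2^k/3$ strictly, which moots any issue with half-open endpoints.)

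Next I would choose the shift in each coordinate. Write $Q = I_1 \times \cdots \times I_n$, each $I_j$ an interval of length $\ell$, and consider generation $k$ of $D^0$, with intervals $[2^k m, 2^k(m+1))$ in coordinate $j$. If $I_j$ lies inside one of these, set $t_j = 0$. Otherwise $I_j$ contains exactly one point of $2^k\Z$ --- only one, since $\ell < 2^k$ --- and I set $t_j = \tfrac13$: the generation-$k$ intervals of $D^{1/3}$ in coordinate $j$ are $2^k([0,1)+m+(-1)^k/3)$, and a direct check (the cases $k$ even and $k$ odd being mirror images of each other) shows that this lattice point is flanked by the endpoints of the containing $D^{1/3}$-interval at distances $2^k/3$ and $2\cdot 2^k/3$. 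Since $I_j$ has length $\ell \le 2^k/3$ and contains that point, it lies inside that $D^{1/3}$-interval.

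Finally I would assemble the pieces. With $t = (t_1,\dots,t_n) \in \{0,\tfrac13\}^n$, each interval chosen above has the form $2^k([0,1)+m_j+(-1)^k t_j)$ for a suitable $m_j \in \Z$ (trivially if $t_j = 0$, by construction if $t_j = \tfrac13$), so their product is a single cube $Q_t = 2^k([0,1)^n + m + (-1)^k t) \in D^t$ with $m = (m_1,\dots,m_n)$, satisfying $Q \subseteq Q_t$ and $l(Q_t) = 2^k \le 6\,l(Q)$.

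The only step requiring real care is the geometric claim in the second paragraph, and it is genuinely elementary: the endpoints of the generation-$k$ intervals of $D^{1/3}$ form the set $2^k(\Z + \tfrac13)$ for $k$ even and $2^k(\Z - \tfrac13)$ for $k$ odd, so in both cases each point of $2^k\Z$ is surrounded by them with gaps $2^k/3$ and $2\cdot 2^k/3$; everything else is bookkeeping. The constant $6$ is not mysterious either --- it is forced by the requirement that $2^k$ be at once $\ge 3\ell$ and a power of $2$.
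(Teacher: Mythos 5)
Your proof is correct and complete: the scale choice $3\ell \le 2^k \le 6\ell$, the coordinatewise dichotomy based on whether $I_j$ meets $2^k\Z$, and the observation that each point of $2^k\Z$ lies at distances $2^k/3$ and $2\cdot 2^k/3$ from the endpoints of its containing generation-$k$ interval of $D^{1/3}$ (with the sign $(-1)^k$ only swapping the two sides) are exactly what is needed, and you correctly handle the half-open-endpoint degeneracy when $3\ell$ is a power of $2$. The paper states this ``one-third trick'' without proof, and your argument is the standard one, so there is nothing further to reconcile.
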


\begin{lemma}\label{Fractional_linearcombo}
    Let $0 < \alpha < n$, $b \in L^1_\text{loc}(\C)$, and suppose $f \geq 0$.  Then
    \begin{equation}
        C_bf(x) \leq 16\sum_{t \in \{0,1/3\}^2} C_b^{\mathcal{D}^t}f(x).
    \end{equation}
\end{lemma}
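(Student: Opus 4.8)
The plan is to reduce the continuous kernel of $C_b$ to a sum over dyadic grids by pointwise domination. First I would fix $x \in \C$ and estimate the integral defining $C_b f(x)$ by splitting the plane into dyadic annuli centered (loosely) at $x$: writing $A_0 = \{\zeta : |x - \zeta| < 1\}$ and, more usefully, dyadic shells $A_k = \{\zeta : 2^{k-1} \le |x - \zeta| < 2^k\}$, one has
\[
|C_b f(x)| \le \int_\C \frac{|b(x) - b(\zeta)|}{|x-\zeta|} f(\zeta)\,dA(\zeta) = \sum_{k \in \Z} \int_{A_k} \frac{|b(x)-b(\zeta)|}{|x-\zeta|} f(\zeta)\,dA(\zeta),
\]
and on $A_k$ we have $|x - \zeta|^{-1} \le 2^{1-k}$, so each term is at most $2^{1-k}\int_{A_k}|b(x)-b(\zeta)|f(\zeta)\,dA(\zeta)$. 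Now enclose the ball $B(x,2^k)$ in a cube $Q$ of side length comparable to $2^k$ containing $x$; since $A_k \subseteq B(x, 2^k)$ and $|Q|^{1/2}$ is comparable to $2^k$, the term is controlled by a constant times $|Q|^{1/2}\fint_Q |b(x) - b(y)|f(y)\,dy$. Summing a geometric-type series in $k$ (the $2^{-k}$ decay is exactly compensated by the growth $|Q|^{1/2}\sim 2^k$ in the normalized average, but the averaging over the larger cube at each scale makes the sum telescope into a single supremum-free sum over nested cubes) gives domination by $\sum_{Q \ni x} |Q|^{1/2}\fint_Q|b(x)-b(y)|f(y)\,dy$ over an appropriate family of cubes, which is precisely the structure of $C_b^{\mathcal D} f(x)$.

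Next I would invoke the one-third trick, Theorem \ref{one_third_trick}: each cube $Q$ appearing above is contained in some $Q_t \in D^t$ for one of the four grids $t \in \{0, 1/3\}^2$, with $l(Q_t) \le 6 l(Q)$, hence $|Q_t| \le 36 |Q|$. Replacing $Q$ by $Q_t$ inflates the normalized average $\fint_{Q_t}|b(x)-b(y)|f(y)\,dy$ by at most a factor $36$ relative to $\fint_Q$ (since $f, |b(x)-b(\cdot)| \ge 0$, enlarging the domain of integration only helps, and the normalization constant changes by a bounded factor), and $|Q_t|^{1/2} \le 6|Q|^{1/2}$. Since $x \in Q \subseteq Q_t$, the cube $Q_t$ is one of the dyadic cubes contributing to $C_b^{D^t} f(x)$ for the relevant $t$. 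Collecting the four possible grids and tracking the constant — roughly $2$ from the annulus bound, $6$ from the sidelength inflation, and the geometric sum — yields the factor $16$ and the claimed inequality $C_b f(x) \le 16 \sum_{t} C_b^{D^t} f(x)$.

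The main obstacle, and the step deserving the most care, is the bookkeeping in the first paragraph: controlling the sum over dyadic shells $A_k$ uniformly in $x$ without losing more than a constant, and in particular verifying that the "same" cube containing $B(x, 2^k)$ at scale $k$ can be taken from a single fixed collection so that the dyadic operator's sum over $Q \in \mathcal D$ genuinely dominates it. One has to be slightly careful that passing from balls $B(x, 2^k)$ to cubes, and then from those cubes to dyadic cubes of the $D^t$ grids, does not produce overcounting that degrades the constant — but since we are proving only an upper bound and all integrands are nonnegative, overcounting is harmless, and the one-third trick guarantees a valid dyadic enclosure at every scale. The parameter $\alpha$ in the lemma statement plays no role here (the Cauchy kernel is the fixed case corresponding to the exponent $1/2$ in the definition of $C_b^{\mathcal D}$), so I would not expect the hypothesis $0 < \alpha < n$ to enter the argument at all.
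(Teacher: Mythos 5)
Your argument is essentially the paper's own proof: decompose around $x$ into dyadic shells, bound the kernel by $2^{-(k-1)}$ on each shell, enclose the scale-$2^k$ region in a cube and upgrade it to a cube of one of the four grids $D^t$ via the one-third trick, then use nonnegativity of the integrands so the resulting per-scale terms are dominated by the sums defining $C_b^{\mathcal{D}^t}f(x)$ (the paper works with cubic annuli $Q(x,2^k)\setminus Q(x,2^{k-1})$ rather than round ones, which is immaterial, and its constant tracking is just as loose as yours, writing $\lesssim 16$). Your remark that the hypothesis $0<\alpha<n$ plays no role is also consistent with the paper, where it never enters the argument.
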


\begin{proof}[Proof of Lemma \ref{Fractional_linearcombo}]
    We begin by dividing $\C$ into cubic annular regions:
    \begin{align*}
        C_bf(x) \leq{}& \sum_{k \in \Z} \int_{Q(x,2^k) \setminus Q(x,2^{k-1})} \frac{|b(x) - b(y)|}{|x-y|}f(y)\,dy\\
        \leq{}& \sum_{k \in Z} \frac{1}{2^{(k-1)}} \int_{Q(x,2^k)} |b(x) - b(y)|f(y)\,dy.
    \end{align*}
    By Theorem \ref{one_third_trick} we know that for each $k$ there is a $t \in \{0,1/3\}^2$ and $Q_t \in \mathcal{D}^t$ such that $Q(x,2^k) \subseteq Q_t$ and
    \[ 2^{k+1} = l(Q(x,2^k)) \leq l(Q_t) \leq 6 l(Q(x,2^k)) = 3 \cdot 2^{k+2}. \]
    Therefore we see that $2^{k+1} \leq l(Q_t) \leq 2^{k+3}$ and so
    \[ \frac{2^{-4}}{2^{(k-1)}} = \frac{1}{2^{(k+3)}} \leq \frac{1}{(l(Q_t))} = \frac{1}{|Q_t|^{\frac{1}{2}}}. \]
    With this we have that
    \begin{align*}
        {}&\sum_{k \in Z} \frac{1}{2^{k-1}} \int_{Q(x,2^k)} |b(x) - b(y)|f(y)\,dy\\
        \leq{}& \sum_{k \in \Z} \sum_{t \in \{0,1/3\}^2} \sum_{\substack{Q \in \mathcal{D}^t\\2^{k+1} \leq l(Q) \leq 2^{k+3}}} \left( \frac{2^{4}}{|Q|^{\frac{1}{2}}} \int_Q |b(x) - b(y)|\,f(y)\,dy \right) \chi_Q(x)\\
        \lesssim{}& 16 \sum_{t \in \{0,1/3\}^2} \sum_{Q \in \mathcal{D}^t} \left( |Q|^{\frac{1}{2}} \fint_Q |b(x) - b(y)|\,f(y)\,dy \right) \chi_Q(x)\\
        ={}& 16 \sum_{t \in \{0,1/3\}^2} C_b^{\mathcal{D}^t}f(x).
    \end{align*}
\end{proof}

\begin{thm}[The Calder\'{o}n-Zygmund Decomposition]
    Let $f$ be integrable and non-negative, and let $\lambda > 0$.  Then there exists a sequence $\{Q_j\}_{j \in \Z}$ of disjoint dyadic cubes such that
    \begin{enumerate}
        \item $f(x) \leq \lambda$ almost everywhere for $x \notin \bigcup_j Q_j$;

        \item $\left| \bigcup_j Q_j \right| \leq \frac{1}{\lambda} \|f\|_{L^1}$;

        \item $\lambda < \fint_{Q_j}f \leq 2^n \lambda$.
    \end{enumerate}
\end{thm}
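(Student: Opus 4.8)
The plan is to run a stopping-time (maximality) argument on the fixed dyadic grid. Since $f$ is integrable and nonnegative, every dyadic cube $Q$ satisfies $\fint_Q f \le |Q|^{-1}\|f\|_{L^1}$, so there is a scale $N = N(\lambda,\|f\|_{L^1})$ above which every dyadic cube $Q$ with $l(Q) \ge 2^N$ has $\fint_Q f \le \lambda$. Starting from cubes at this scale and repeatedly subdividing, I would declare a dyadic cube $Q$ \emph{selected} precisely when $\fint_Q f > \lambda$ while its dyadic parent $\widehat{Q}$ satisfies $\fint_{\widehat{Q}} f \le \lambda$, and let $\{Q_j\}_j$ enumerate the selected cubes. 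Because any two dyadic cubes are either nested or disjoint and a selected cube is by construction a \emph{maximal} dyadic cube on which the average of $f$ exceeds $\lambda$, the family $\{Q_j\}_j$ is pairwise disjoint.

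Next I would establish (3). For a selected $Q_j$ the stopping condition gives the lower bound $\lambda < \fint_{Q_j} f$ immediately, and from $\fint_{\widehat{Q_j}} f \le \lambda$ we get
\[ \int_{Q_j} f \le \int_{\widehat{Q_j}} f \le \lambda\,|\widehat{Q_j}| = 2^n\lambda\,|Q_j|, \]
so $\fint_{Q_j} f \le 2^n\lambda$. Property (2) then follows from disjointness together with the lower bound in (3):
\[ \Big|\bigcup_j Q_j\Big| = \sum_j |Q_j| < \frac{1}{\lambda}\sum_j \int_{Q_j} f \le \frac{1}{\lambda}\,\|f\|_{L^1}. \]

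For (1), suppose $x \notin \bigcup_j Q_j$. I claim every dyadic cube $Q \ni x$ has $\fint_Q f \le \lambda$: otherwise, among the finitely many dyadic ancestors of $Q$ joining it to scale $2^N$, the ones containing $x$ on which the average of $f$ exceeds $\lambda$ form a nonempty finite chain, whose largest member $Q'$ has parent of average $\le \lambda$ and hence is selected while containing $x$ — contradicting $x\notin\bigcup_j Q_j$. Thus the averages of $f$ over the decreasing sequence of dyadic cubes containing $x$ are all $\le \lambda$, and by the dyadic Lebesgue differentiation theorem these averages converge to $f(x)$ for a.e.\ $x$, giving $f(x) \le \lambda$ a.e.\ off $\bigcup_j Q_j$.

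The argument is entirely routine; the only points requiring care are the maximality bookkeeping that underlies both the disjointness of $\{Q_j\}_j$ and the claim in (1), together with the standard appeal to dyadic differentiation that converts "all dyadic averages at $x$ are $\le\lambda$" into the pointwise bound $f(x)\le\lambda$.
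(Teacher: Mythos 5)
The paper states this decomposition as a classical result and gives no proof, so there is nothing to compare your argument against; judged on its own, it is the standard stopping-time construction, but one step fails as literally written. You declare $Q$ selected \emph{precisely when} $\fint_Q f>\lambda$ while $\fint_{\widehat{Q}} f\le\lambda$, and then assert that a selected cube is ``by construction'' a maximal dyadic cube on which the average exceeds $\lambda$; that equivalence is false, because dyadic averages along a chain of ancestors can oscillate about $\lambda$. For instance, with $n=1$ and $\lambda=1$, take $f\ge 0$ supported in $[0,1)$ with $\int_{[0,1/4)}f=0.3$, $f\equiv 0$ on $[1/4,1/2)$, and $\int_{[1/2,1)}f=0.8$: then both $[0,1/4)$ and $[0,1)$ satisfy your criterion (their parents $[0,1/2)$ and $[0,2)$ have averages $0.6$ and $0.55$), yet they are nested. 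So the family you enumerate need not be pairwise disjoint, which is itself part of the conclusion, and your proof of (2), which sums the $|Q_j|$, collapses with it.

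The repair is the usual one and costs nothing: take the $Q_j$ to be the \emph{maximal} dyadic cubes with $\fint_Q f>\lambda$, i.e.\ exactly the cubes at which the top-down subdivision you describe first stops, rather than all cubes whose parent happens to have small average. Maximal cubes exist because, as you note, every dyadic cube of sidelength at least $2^N$ has average $\le\lambda$; maximality gives pairwise disjointness immediately, and since the dyadic parent of a maximal cube strictly contains it, the parent's average is $\le\lambda$, which is precisely what your computation for (3) uses. With this corrected selection, the rest of your argument goes through verbatim: the chaining for (2), the parent comparison for (3), and the proof of (1), where the largest dyadic cube containing $x$ with average $>\lambda$ is maximal, hence selected, followed by the appeal to dyadic Lebesgue differentiation.
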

We call the collection $\{Q_j\}$ from the above decomposition the Calder\'{o}n-Zygmund cubes of $f$ at height $\lambda$.  There are particular subsets of collections of dyadic cubes that will be crucial to our results.

\begin{defn}
    A family $\mathcal{S}$ of dyadic cubes in $\R^n$ is sparse if there exists $0 < \alpha < 1$ such that for all $Q \in \mathcal{S}$ there is a measurable set $E_Q \subseteq Q$ such that $|E_Q| \geq \alpha |Q|$ and the collection $\{E_Q\}_{Q \in \mathcal{S}}$ is pairwise disjoint.
\end{defn}

\begin{thm}[\cite{MR1291534}]
    Let $f$ be integrable and fix $a > 2^n$.  For each $k \in \Z$, let $\{Q_j^k\}_{j \in \Z}$ be the Calder\'{o}n-Zygmund cubes of $f$ at height $a^k$.  Then the collection $\{Q_j^k\}_{j,k \in \Z}$ is sparse.
\end{thm}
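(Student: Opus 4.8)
The plan is the classical stopping-cube argument: to each Calder\'on--Zygmund cube at height $a^k$ we attach the portion of itself not yet covered by the Calder\'on--Zygmund cubes at the next height $a^{k+1}$. Set $\Omega_k := \bigcup_j Q_j^k$ and recall that the Calder\'on--Zygmund cubes of $f$ at height $\lambda$ are exactly the maximal dyadic cubes $Q$ with $\fint_Q f > \lambda$, so that $\Omega_k = \{x : M^{\mathcal{D}}f(x) > a^k\}$ for the dyadic maximal operator $M^{\mathcal{D}}$. Since $a^{k+1} > a^k$, this yields the nesting $\Omega_{k+1} \subseteq \Omega_k$; moreover every cube $Q_i^{k+1}$ lies inside a unique $Q_j^k$, because $Q_i^{k+1}$ is a dyadic cube with $\fint_{Q_i^{k+1}} f > a^{k+1} > a^k$, hence is contained in some maximal such cube, and the $Q_j^k$ are pairwise disjoint. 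For each cube $Q = Q_j^k$ in our collection, I would then set $E_Q := Q \setminus \Omega_{k+1} = Q \setminus \bigcup_i Q_i^{k+1}$.

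Next I would verify the measure lower bound. Using that the cubes $Q_i^{k+1}$ contained in $Q$ are pairwise disjoint, that $\fint_{Q_i^{k+1}} f > a^{k+1}$, and property (3) of the Calder\'on--Zygmund decomposition applied to $Q = Q_j^k$, one gets
\begin{align*}
    |Q \cap \Omega_{k+1}| = \sum_{Q_i^{k+1}\subseteq Q} |Q_i^{k+1}| &< \frac{1}{a^{k+1}}\sum_{Q_i^{k+1}\subseteq Q} \int_{Q_i^{k+1}} f \\
    &\leq \frac{1}{a^{k+1}}\int_Q f \leq \frac{2^n a^k}{a^{k+1}}|Q| = \frac{2^n}{a}|Q|.
\end{align*}
Hence $|E_Q| \geq \bigl(1 - \tfrac{2^n}{a}\bigr)|Q|$, and since $a > 2^n$ the constant $\alpha := 1 - \tfrac{2^n}{a}$ lies in $(0,1)$, uniformly over all cubes in the collection.

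Finally I would check that $\{E_Q\}$ is pairwise disjoint over the whole family. Two distinct cubes at the same height are disjoint by the Calder\'on--Zygmund decomposition, so their $E$-sets are disjoint. No cube can occur at two different heights: $Q_j^k = Q_{j'}^{k'}$ with $k < k'$ would force $a^{k'} < \fint_Q f \leq 2^n a^k$, i.e.\ $a^{k'-k} < 2^n$, which is impossible when $a > 2^n$ and $k' \geq k+1$. For $Q = Q_j^k$ and $Q' = Q_{j'}^{k'}$ with $k < k'$ we have $E_{Q'} \subseteq Q' \subseteq \Omega_{k'} \subseteq \Omega_{k+1}$, while $E_Q \cap \Omega_{k+1} = \emptyset$ by construction, so $E_Q \cap E_{Q'} = \emptyset$. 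This produces the pairwise disjoint sets $\{E_Q\}$ with $|E_Q| \geq \alpha|Q|$ required by the definition, so $\{Q_j^k\}_{j,k}$ is sparse.

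I do not expect a genuine obstacle here, as this is a standard fact. The only two points requiring care are the maximality characterization of the Calder\'on--Zygmund cubes (which is precisely what makes the level-$(k+1)$ cubes nest inside the level-$k$ cubes, so that $E_Q$ is well controlled), and the observation that the geometric gap $a > 2^n$ between consecutive heights is exactly what both prevents a cube from appearing at two heights and forces the uniform lower bound $\alpha > 0$; with $a$ merely exceeding $1$ the argument would break down.
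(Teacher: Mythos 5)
Your proof is correct, and it is the classical argument: the paper does not prove this theorem at all but cites it (it is exactly the standard sparseness of Calder\'on--Zygmund stopping cubes, with $E_{Q_j^k}=Q_j^k\setminus\Omega_{k+1}$ and the gap $a>2^n$ giving $|E_{Q_j^k}|\ge(1-2^n/a)|Q_j^k|$). The only cosmetic remark is that, as in the paper's statement of the Calder\'on--Zygmund decomposition, one should work with $f\ge 0$ (or replace $f$ by $|f|$) so that the averages $\fint_Q f$ behave monotonically under the inclusion $\bigcup_i\int_{Q_i^{k+1}}f\le\int_Q f$; with that understood, every step, including the check that no cube occurs at two heights and the disjointness of the sets $E_Q$ across heights, is sound.
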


It is instructive to present Hyt\"{o}nen's characterization for commuators of Calder\'{o}n-Zygmund operators.
\begin{thm}
    Let $T$ be any ``uniformly non-degenerate" Calder\'{o}n-Zygmund operator on $\R^n$, $n \geq 1$.  Let $1 < p,q < \infty$ and let $b \in L^1_\text{loc}(\R^n)$.  Then
    \[ [b,T]:L^p(\R^n) \ra L^q(\R^n) \]
    if, and only if,
    \begin{enumerate}
        \item $p=q$ and $b \in BMO$;

        \item $p < q \leq p*$, where $\frac{1}{p*} = \left( \frac{1}{p} - \frac{1}{n}\right)_+$ and $b \in C^{0,\alpha}$ with $\frac{\alpha}{n} = \frac{1}{p} - \frac{1}{q}$;

        \item $q > p*$ and $b$ is constant;

        \item $p > q$ and $b = a + c$, where $c$ is a constant and $a \in L^r$ for $\frac{1}{r} = \frac{1}{q} - \frac{1}{p}$.
    \end{enumerate}
\end{thm}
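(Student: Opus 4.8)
The plan is to prove the two directions separately: sufficiency by dominating the commutator with operators whose mapping properties are classical, and necessity by a testing argument built on the non-degeneracy of $T$. For sufficiency, case (3) is trivial since $[b,T]\equiv 0$. In case (2) one uses $|b(x)-b(y)|\le [b]_{C^{0,\alpha}}|x-y|^\alpha$, so that $|[b,T]f(x)|\lesssim [b]_{C^{0,\alpha}}I_\alpha(|f|)(x)$, and Hardy--Littlewood--Sobolev gives $I_\alpha:L^p(\R^n)\ra L^q(\R^n)$ precisely when $\alpha/n=1/p-1/q$; the constraint $q\le p*$ is exactly what forces $\alpha\le 1$, so the Hölder class is the right object in this range. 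In case (4), writing $b=a+c$ we have $[b,T]f=aTf-T(af)$, and Hölder's inequality with $1/q=1/r+1/p$ combined with the $L^p$- and $L^q$-boundedness of $T$ yields $\|aTf\|_q\le\|a\|_r\|Tf\|_p\lesssim\|a\|_r\|f\|_p$ and $\|T(af)\|_q\lesssim\|af\|_q\le\|a\|_r\|f\|_p$. Case (1) is the Coifman--Rochberg--Weiss theorem; one convenient route is to dominate $|[b,T]f|$ pointwise by the sparse forms $f\mapsto\sum_{Q\in\mathcal S}\left(\fint_Q|b-\langle b\rangle_Q|\,|f|\right)\chi_Q$ and its companion with the roles of $b-\langle b\rangle_Q$ and $f$ exchanged, and to bound these on $L^p$ using $b\in BMO$.

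The heart of the theorem is the necessity direction, and the plan follows Hytönen's testing scheme. One starts from the exact identity $\langle[b,T]f,g\rangle=\langle b,\,(Tf)g-f(T^{*}g)\rangle$. The idea is, for each cube $Q$, to choose $f$ and $g$ — essentially normalized characteristic functions of a suitable subset of a twin cube of $Q$ and of a median-split subset of $Q$ — so that $h:=(Tf)g-f(T^{*}g)$ coincides, up to an error small in the relevant norm, with a prescribed mean-zero function detecting the oscillation of $b$ on $Q$. Two ingredients drive this. First, a geometric consequence of non-degeneracy: for every cube $Q$ there is a twin cube $\widetilde Q$ with $\ell(\widetilde Q)=\ell(Q)$ and $\mathrm{dist}(Q,\widetilde Q)\approx\ell(Q)$ on which the kernel $K$ is sign-definite and satisfies $|K(x,y)|\gtrsim|Q|^{-1}$, so that $Tf$ is comparable to a nonzero constant on $Q$ when $f$ is supported on $\widetilde Q$, with no cancellation. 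Second, kernel-tail estimates controlling the error terms. Carrying this out with $\|f\|_p\approx|Q|^{1/p}$ and $\|g\|_{q'}\approx|Q|^{1/q'}$ and invoking the hypothesis gives the master estimate $\fint_Q|b-\langle b\rangle_Q|\lesssim\|[b,T]\|_{L^p\ra L^q}\,|Q|^{1/p-1/q}$ for every cube $Q$.

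The conclusions are then read off from the sign of $1/p-1/q$. When $p=q$ the right-hand side is a fixed multiple of $\|[b,T]\|$, hence $b\in BMO$. When $p<q$ one obtains $\fint_Q|b-\langle b\rangle_Q|\lesssim\ell(Q)^\alpha$ with $\alpha/n=1/p-1/q$; by the Campanato--Meyers characterization this is equivalent to $b\in C^{0,\alpha}$ when $\alpha\in(0,1]$ (case (2), i.e.\ $q\le p*$), and it forces $b$ constant when $\alpha>1$ (case (3), i.e.\ $q>p*$). When $p>q$ the master estimate by itself only bounds local oscillation, and the remaining — and hardest — step is the globalization. Here one instead runs the testing scheme on a large cube $B$ with the mean-zero test data engineered so that the pairing reproduces $\|b-\langle b\rangle_B\|_{L^r(B)}$ (using $h$ of the shape $|b-\langle b\rangle_B|^{r-1}\mathrm{sgn}(b-\langle b\rangle_B)$, suitably normalized), where $1/r=1/q-1/p$; the favorable scaling factor $|B|^{1/p-1/q}$ then makes the bound $\|b-\langle b\rangle_B\|_{L^r(B)}\lesssim\|[b,T]\|$ \emph{uniform} in $B$. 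Since the uniform bound forces $\langle b\rangle_B$ to be Cauchy along any exhaustion of $\R^n$ (as $|\langle b\rangle_{B_1}-\langle b\rangle_{B_2}|\le|B_1|^{-1/r}\|b-\langle b\rangle_{B_2}\|_{L^r(B_2)}$), one passes to the limit $\langle b\rangle_B\to c$ and concludes by monotone convergence that $b-c\in L^r(\R^n)$. I expect this globalization in case (4), together with the bookkeeping of the error terms in the approximate weak factorization, to be the main obstacle; the cases $p\le q$ are comparatively soft once the master estimate is in hand.
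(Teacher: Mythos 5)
This theorem is quoted in the paper as background from Hyt\"onen's work (\cite{MR4390233}, \cite{MR4338459}) and the paper gives no proof of it, so your proposal can only be judged on its own terms. The sufficiency direction and the necessity in the cases $p\le q$ are fine: the $I_\alpha$ domination, the H\"older-product argument for $b=a+c$, sparse domination for $p=q$, and the twin-cube testing that yields the master estimate $\fint_Q|b-b_Q|\lesssim \|[b,T]\|\,|Q|^{1/p-1/q}$ are all standard and correct, and indeed this testing scheme is the same one the paper itself runs for the necessity part of its Theorem \ref{MainResult}.

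The genuine gap is in your globalization for case (4), $p>q$. Your plan is to test on a single large cube $B$ with $h=|b-b_B|^{r-1}\mathrm{sgn}(b-b_B)\chi_B$ (mean-corrected) and absorb, but the exponents do not close. In the approximate weak factorization the pairing is bounded by $\|[b,T]\|\,\|f\|_{L^p}\|g\|_{L^{q'}}$ plus iterated errors, and whichever factor carries $h$ you obtain a right-hand side of size $\|b-b_B\|_{L^{(r-1)p}(B)}^{\,r-1}|B|^{1/q'}$ or $\|b-b_B\|_{L^{(r-1)q'}(B)}^{\,r-1}|B|^{1/p}$. Since $1/r=1/q-1/p$ forces $p>r'$ and $q'>r'$, i.e.\ $(r-1)p>r$ and $(r-1)q'>r$, these involve strictly stronger norms of $b-b_B$ than the $L^r(B)$ norm you are trying to control, so no absorption or H\"older self-improvement is available; the claimed uniform bound $\|b-b_B\|_{L^r(B)}\lesssim\|[b,T]\|$ does not follow from this single-cube test. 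Nor can you fall back on the first-order master estimate: the function $b(x)=|x|^{-n/r}\chi_{B(0,1)}(x)$ satisfies $\fint_Q|b-b_Q|\lesssim|Q|^{1/p-1/q}=|Q|^{-1/r}$ for every cube $Q$ yet lies only in weak $L^r$ plus constants, not in $L^r+\C$, so the condition in case (4) is strictly stronger than any single-cube first-power oscillation bound. The known proofs of this case instead test the commutator against data spread over arbitrary finite families of pairwise disjoint cubes simultaneously (with a median/complex-median construction) and exploit $p>q$ through the resulting $\ell^p$-versus-$\ell^{q}$ summation over the family; some such multi-cube mechanism, together with a characterization of $L^r+\text{constants}$ by these family-wise oscillation sums, is exactly what your outline is missing. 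You flag this step as the main obstacle, but flagging it does not supply the argument, and as written the route you propose for it fails on the exponent bookkeeping.
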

Extending Hyt\"{o}nen's techniques to the Cauchy transform presented several similarities, but stark differences when considering the difficult range $p > q$.  His proof utilizes that
\[ \frac{1}{r} = \frac{1}{q} - \frac{1}{p} \implies \frac{1}{r'} = \frac{1}{q'} + \frac{1}{p}. \]
This relationship is crucial for the use of several clever duality arguments made in proving necessity.  However in our case, rather than comparing $p$ and $q$ we need to compare $\frac{\alpha}{n}$ and $\frac{1}{p} - \frac{1}{q}$.  Further study shows that the $L^r + C$ space is not the correct condition for the Cauchy transform, but rather a Campanato space.  This space is a generalization of the $BMO$ space; for more on the Campanato space, see \cite{MR3025501}.

\begin{defn}
    Given $1 \leq p < \infty$ and $\lambda \geq 0$, the Campanato space $\mathscr{L}^{p,\lambda}(\R^n)$ is defined as
    \[ \mathscr{L}^{p,\lambda} = \{ f \in L^p(\R^n) \,:\, [f]_{\mathscr{L}^{p,\lambda}} < \infty\} \]
    where the Campanato seminorm is
    \[ [f]_{\mathscr{L}^{p,\lambda}(\R^n)} = \sup_{x \in \R^n; r > 0}\left( \frac{1}{r^\lambda} \int_{Q(x,r)}|f(y) - f_Q|^p\,dy\right)^\frac{1}{p}. \]
\end{defn}

\section{Proof of Theorem \ref{MainResult}}
The $\beta > 1$ case is immediate since commutators $C_b$ vanish whenever $b$ is a constant function.  Proceeding to prove the remaining sufficiency conditions, we suppose $\beta \in (0,1]$ and that $b \in L^1_{loc}(\C)$ satisfies
\[ |b(x) - b(y)| \lesssim |x-y|^\beta. \]
Then we immediately have
\[ |C_bf(x)| \leq \int_{\C} \frac{|b(x) - b(y)|}{|x-y|}\,|f(y)|dy
    \leq \int_{\C} \frac{|f(y)|}{|x-y|^{2 - (1 + \beta)}}\,dy
    = I_{1 + \beta}(|f|)(x). \]
By known properties of the Riesz potential and since
\[ \frac{1 + \beta}{2} = \frac{1}{p} - \frac{1}{q}, \]
then $C_b:L^p \ra L^q$.

For the $\beta \leq 0$ cases, without loss of generality let $\phi = q$.  We will use the duality condition
\begin{equation}\label{duality_eqn}
    \left( \int_{\C}|C_b f(x)|^q\,dx\right)^{1/q} = \sup_{\|u\|_{L^{q'}(\C)}=1} \int_{\C} C_bf(x) u(x)\,dx.
\end{equation}
If $\phi = p'$ we replace $q$ with $p'$ in \eqref{duality_eqn} and note that $C_b:L^p \ra L^q$ is equivalent to $C_b:L^{q'} \ra L^{p'}$.  The proof then follows by similar argument.  By standard density arguments we may assume that $u,f \in L_c^\infty(\C)$.  We will show that the integral on the right-hand side is bounded with a constant independent of $u$.  Fix some $a > 2^n$ and let $m \in \N$ be such that $\|u\|_{L^\infty} \leq a^m$.  The following technique, referred to as an atomic decomposition, is due to Lerner in \cite{MR2093912}, also see \cite{MR2351373}.  For each integer $k \leq m$ let $\{Q_j^k\}_{j}$ be the Calder\'{o}n-Zygmund cubes of $u$ at height $a^k$.  We employ the standard `good' and `bad' functions from the Calder\'{o}n-Zygmund decomposition:
\begin{align*}
    b_k(x) ={}& \sum_j (u(x) - u_{Q_j^k})\chi_{Q_j^k}(x),\\
    g_k(x) ={}& u(x) - b_k(x).
\end{align*}
Since $\|u\|_{L^\infty} \leq a^m$, for each integer $l < 0$ we have
\[ u(x) = \sum_{k = l}^{m-1} (b_k(x) - b_{k+1}(x)) + g_l(x). \]
Also note that for each $j$ and $k$ we have
\[ (b_k(x) - b_{k+1}(x))\chi_{Q_j^k}(x) = (u(x) - u_{Q_j^k})\chi_{Q_j^k}(x) - \sum_{Q_i^{k+1} \subseteq {Q_j^k}} (u(x) - u_{Q_i^{k+1}})\chi_i^{k+1}(x), \]
which gives us the useful inequality
\[ |b_k(x) - b_{k+1}(x)| \leq (1+a)2^n a^k \lesssim u_{Q_j^k}. \]

To prove our boundedness we let $l < 0$ and use
\begin{equation}\label{main_commutator_decomp}
    \int_\C C_bf(x)u(x)\,dx = \sum_{k=l}^{m-1} \int_\C C_bf(x)(b_k(x) - b_{k+1}(x))\,dx + \int_\C C_bf(x) g_l(x)\,dx.
\end{equation}
We first claim that the second term goes to $0$ as $l \ra -\infty$.  Using Lemma \ref{Fractional_linearcombo} and the triangle inequality gives us
\begin{align*}
    \int_\C C_bf(x) g_l(x)\,dx \leq{}& \sum_{t \in \{0,1/3\}^2} \sum_{Q \in \mathcal{D}^t} |Q|^{3/2}\left( \fint_Q |b(x) - b_Q| g_l(x)\,dx\right) f_Q\\
    {}& \quad + \sum_{t \in \{0,1/3\}^2} \sum_{Q \in \mathcal{D}^t} |Q|^{3/2}\left( \fint_Q |b(y) - b_Q| f(y)\,dy\right) (g_l)_Q.\\
\end{align*}
We will show that
\begin{align*}
    I_1 = \sum_{Q \in \mathcal{D}} |Q|^{3/2}\left( \fint_Q |b(x) - b_Q| g_l(x)\,dx\right) f_Q \ra 0 \quad \text{as} \quad l \ra -\infty
\end{align*}
for a dyadic grid $\mathcal{D}$.  Interchanging $g_l$ and $f$ suffices to give the same tendency for
\[ I_2 =\sum_{Q \in \mathcal{D}} |Q|^{3/2}\left( \fint_Q |b(y) - b_Q| f(y)\,dy\right) (g_l)_Q, \]
thus showing that $I_1$ tends to zero works for both terms.

Let $0 < \epsilon < 1$.  Stripping away a power of our good function and using known bounds from our decomposition gives us
\begin{equation}\label{epsilon_inequality}
    I_1 \leq (2^n a^l)^\epsilon \sum_{Q \in \mathcal{D}} |Q|^{3/2}\left( \fint_Q |b - b_Q|g_l^{1-\epsilon} \right) f_Q
\end{equation}
We would like to bound our dyadic summation by a similar one over a sparse subset $\mathcal{S} \subseteq \mathcal{D}$.  The following two claims are modifications of arguments used in \cite{MR2918187} and \cite{MR3688143}.  Let $c > 0$ be a constant to be determined, $g \in L^p_{\text{loc}}(\R^n)$ for some $p > 1$, and for each $k \in \Z$ define $\mathcal{C}_k = \{x \in \mathcal{D} \,:\, c^k < \|g\|_{L^p,Q} \leq c^{k+1}\}$.  Also define $\mathcal{S}_k$ as the maximal, disjoint collection of cubes $Q \in \mathcal{D}$ such that
\begin{equation}\label{sparse_inequality}
    \|g\|_{L^p,Q} > c^k.
\end{equation}

\begin{claim}\label{SparseLemma}
    $\mathcal{S} = \bigcup_{k \in \Z}\mathcal{S}_k$ is a sparse family.
\end{claim}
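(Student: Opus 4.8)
The plan is to verify the sparseness of $\mathcal{S} = \bigcup_{k \in \Z} \mathcal{S}_k$ directly from the definition, by exhibiting for each $Q \in \mathcal{S}$ a large subset $E_Q \subseteq Q$ with the $E_Q$ pairwise disjoint. First I would fix the parameters: note that each $\mathcal{S}_k$ is, by construction, a disjoint collection of cubes (maximal dyadic cubes on which the average $\|g\|_{L^p,Q} > c^k$), so a cube $Q \in \mathcal{S}_k$ can only fail to be disjoint from cubes at other levels. The key structural observation is that the levels are \emph{nested}: if $k' > k$ then any $Q' \in \mathcal{S}_{k'}$ satisfies $\|g\|_{L^p,Q'} > c^{k'} > c^k$, hence $Q'$ is contained in some maximal cube of $\mathcal{S}_k$; thus the cubes of $\mathcal{S}_{k+1}$ refine those of $\mathcal{S}_k$, and for any $Q \in \mathcal{S}_k$ the cubes of $\mathcal{S}_{k+1}$ contained in $Q$ form the ``children'' in a stopping-time tree.

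Next I would define, for $Q \in \mathcal{S}_k$,
\[
    E_Q = Q \setminus \bigcup_{\substack{Q' \in \mathcal{S}_{k+1} \\ Q' \subseteq Q}} Q',
\]
so that the $E_Q$ are automatically pairwise disjoint across all of $\mathcal{S}$: two cubes at the same level are disjoint outright, and two cubes at different levels are either nested (in which case the inner one is removed from the outer one's $E_Q$) or already disjoint. The remaining task is the quantitative bound $|E_Q| \geq \alpha |Q|$ for a fixed $\alpha \in (0,1)$. For this I would estimate the measure of the removed part, $\bigcup_{Q' \in \mathcal{S}_{k+1}, Q' \subseteq Q} Q'$. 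Each such child $Q'$ is maximal with $\|g\|_{L^p,Q'} > c^{k+1}$, so its dyadic parent $\widehat{Q'}$ satisfies $\|g\|_{L^p,\widehat{Q'}} \leq c^{k+1}$ (unless $\widehat{Q'}$ is not contained in $Q$, a boundary case handled by maximality of $Q$ itself). Using $\|g\|_{L^p}^p$ over $Q'$ being at most $|\widehat{Q'}|/|Q'| = 2^n$ times the average over $\widehat{Q'}$, one gets
\[
    \sum_{\substack{Q' \in \mathcal{S}_{k+1} \\ Q' \subseteq Q}} |Q'| \;=\; \sum_{Q'} \frac{|Q'|}{c^{p(k+1)}}\, \fint_{Q'} |g|^p \;\cdot\; \frac{c^{p(k+1)}}{\fint_{Q'}|g|^p}
\]
— more cleanly, since $\fint_{Q'}|g|^p > c^{p(k+1)}$ we have $|Q'| < c^{-p(k+1)} \int_{Q'}|g|^p$, and the $Q'$ are disjoint inside $Q$ with $\int_{Q'}|g|^p \leq 2^n c^{p(k+1)} |Q'|$ is the wrong direction, so instead I sum $\int_{Q'}|g|^p \leq 2^n c^{p(k+1)}|Q'|$ against... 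Let me restructure: the correct move is that each child's parent has small average, so $\sum_{Q'} |Q'| \leq 2^{-1}|Q|$ will follow once $c$ is chosen large enough that the overlap $\sum \int_{Q'}|g|^p \leq c^{p k} |Q|$ is forced below $c^{p(k+1)}$ times half the volume; concretely I would show $\sum_{Q'}|Q'| \le c^{-p(k+1)} \int_Q |g|^p \le c^{-p(k+1)} \cdot (\text{average on } Q \text{ or its parent}) \cdot |Q| \le c^{-p} 2^n |Q|$, and then pick $c$ with $c^{-p}2^n < 1/2$, i.e. $c > (2^{n+1})^{1/p}$.

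I expect the main obstacle to be the bookkeeping at the ``top'' of each tree, namely handling a cube $Q \in \mathcal{S}_k$ whose dyadic parent does \emph{not} lie in the ambient grid's finite truncation (or, when $g$ is only locally in $L^p$, controlling the average on the parent by the average on $Q$ itself). One resolves this by invoking the reverse-inclusion / doubling comparison $\fint_{\widehat{Q'}}|g|^p \le 2^n \fint_{Q'}|g|^p$ only in the direction needed, and by noting $Q'$ maximal means its parent $\widehat{Q'}$ fails \eqref{sparse_inequality} at level $k+1$ precisely when $\widehat{Q'}$ is still inside some cube of $\mathcal{S}_k$ — if $Q = \widehat{Q'}$ this is vacuous and $Q' \subsetneq Q$ is handled directly. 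With $c = c_n := (2^{n+1})^{1/p}$ fixed this way, $|E_Q| \ge \tfrac12 |Q|$ holds for every $Q \in \mathcal{S}$, so $\mathcal{S}$ is sparse with $\alpha = 1/2$, which is exactly the claim.
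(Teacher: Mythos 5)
Your argument is essentially the paper's own proof: both estimate the total measure of the next-level stopping cubes inside a fixed $Q \in \mathcal{S}_k$ by $c^{-(k+1)p}\int_Q |g|^p \le 2^n c^{-p}|Q|$ and then choose $c = 2^{(n+1)/p}$ to conclude sparseness with $\alpha = 1/2$. The only differences are cosmetic --- you make explicit the nesting of the levels and that the factor $2^n$ comes from the dyadic parent of $Q$ failing the stopping condition, points the paper leaves implicit --- and, despite the mid-proof detour, your final chain of inequalities is exactly the paper's.
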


\begin{proof}[Proof of Claim \ref{SparseLemma}]
    We prove that $\mathcal{S}$ is sparse by showing that
    \[ \bigg| \bigcup_{\substack{Q' \in \mathcal{S}\\Q' \subsetneq Q}}Q' \bigg| \leq \frac{1}{2}|Q| \]
    for each $Q \in \mathcal{S}$.  Fixing some $Q \in \mathcal{S}_k$, then
    \[ \bigg| \bigcup_{\substack{Q' \in \mathcal{S}\\Q' \subsetneq Q}}Q' \bigg| = \bigg|  \bigcup_{\substack{Q' \in \mathcal{S}_{k+1}\\Q' \subsetneq Q}} Q' \bigg| = \sum_{\substack{Q' \in \mathcal{S}_{k+1}\\Q' \subsetneq Q}}|Q'|.   \]
    Noting that $\|g\|_{L^{p},Q'}$ satisfies \eqref{sparse_inequality} with $k + 1$ instead of $k$, we have
    \begin{align*}
        \sum_{\substack{Q' \in \mathcal{S}_{k+1}\\Q' \subsetneq Q}}|Q'| \leq{}& \frac{1}{c^{(k+1)p}} 
        \sum_{\substack{Q' \in \mathcal{S}_{k+1}\\Q' \subsetneq Q}} \int_{Q'}|g|^{p}\\
        \leq{}& \frac{1}{c^{(k+1)p}} \int_Q |g|^p\\
        \leq{}& \frac{2^n}{c^{p}}|Q|.
    \end{align*}
    Letting $c = 2^\frac{n+1}{p}$ finishes the proof.
\end{proof}

\begin{claim}\label{SubdividingDyadicCube_Bound}
    Let $\beta > -1$.  For $\mathcal{D}$ a dyadic grid, there exists a sparse family $\mathcal{S} \subseteq \mathcal{D}$ such that
    \[ \sum_{Q \in \mathcal{D}}|Q|^{\frac{3}{2} + \frac{\beta}{2}} \|g\|_{L^{p},Q} \cdot f_Q \lesssim \sum_{Q \in \mathcal{S}}|Q|^{\frac{3}{2} + \frac{\beta}{2}} \|g\|_{L^{p},Q}\cdot f_Q \]
\end{claim}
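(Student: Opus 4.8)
The plan is to dominate the dyadic sum by a stopping-time regrouping over the sparse family $\mathcal{S}=\bigcup_{k}\mathcal{S}_k$ produced by Claim \ref{SparseLemma} when it is applied to $g$ with the constant $c=2^{(n+1)/p}$ from that claim. Throughout, write $s=\tfrac{3}{2}+\tfrac{\beta}{2}$, so that the hypothesis $\beta>-1$ is precisely the statement $s>1$; this is the only place the hypothesis is used, and it will enter through the convergence of a geometric series at the very end. For each $Q\in\mathcal{D}$ with $\|g\|_{L^p,Q}>0$ there is a unique integer $k=k(Q)$ with $Q\in\mathcal{C}_{k}$, i.e. $c^{k}<\|g\|_{L^p,Q}\le c^{k+1}$; in particular $Q$ satisfies \eqref{sparse_inequality} at level $k$, so, exactly as in the selection of maximal cubes used in \cite{MR2918187}, $Q$ is contained in a (necessarily unique, by disjointness) cube $\pi(Q)\in\mathcal{S}_{k}\subseteq\mathcal{S}$. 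Since $\pi(Q)\in\mathcal{S}_k$ forces $\|g\|_{L^p,\pi(Q)}>c^{k}$, we obtain the comparison
\[ \|g\|_{L^p,Q}\ \le\ c^{k+1}\ =\ c\cdot c^{k}\ <\ c\,\|g\|_{L^p,\pi(Q)}, \]
together with a map $\pi\colon\mathcal{D}\to\mathcal{S}$ satisfying $Q\subseteq\pi(Q)$.

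Next I would regroup the sum along the fibers of $\pi$, discard the vanishing terms, and peel off the factor $\|g\|_{L^p,Q}$ using the comparison above:
\begin{align*}
    \sum_{Q\in\mathcal{D}}|Q|^{s}\,\|g\|_{L^p,Q}\,f_Q
    &=\sum_{P\in\mathcal{S}}\ \sum_{\substack{Q\in\mathcal{D}\\ \pi(Q)=P}}|Q|^{s}\,\|g\|_{L^p,Q}\,f_Q\\
    &\le c\sum_{P\in\mathcal{S}}\|g\|_{L^p,P}\sum_{\substack{Q\in\mathcal{D}\\ \pi(Q)=P}}|Q|^{s}f_Q .
\end{align*}
Since every $Q$ with $\pi(Q)=P$ is contained in $P$, the inner sum is at most $\sum_{Q\subseteq P}|Q|^{s}f_Q$, and there it telescopes: splitting the dyadic subcubes of $P$ into generations $l(Q)=2^{-j}l(P)$, $j\ge 0$, and using that the generation-$j$ subcubes tile $P$ together with $f\ge 0$ (so that $\sum_{Q}\int_Q f=\int_P f$ and $|Q|=2^{-jn}|P|$),
\[ \sum_{\substack{Q\subseteq P\\ l(Q)=2^{-j}l(P)}}|Q|^{s}f_Q=\bigl(2^{-jn}|P|\bigr)^{s-1}\int_{P}f=2^{-jn(s-1)}|P|^{s}f_P . \]
Summing over $j\ge0$ gives a geometric series with ratio $2^{-n(s-1)}<1$ because $s>1$, hence $\sum_{Q\subseteq P}|Q|^{s}f_Q\lesssim_{\beta}|P|^{s}f_P$. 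Substituting back yields
\[ \sum_{Q\in\mathcal{D}}|Q|^{s}\,\|g\|_{L^p,Q}\,f_Q\ \lesssim\ \sum_{P\in\mathcal{S}}|P|^{s}\,\|g\|_{L^p,P}\,f_P, \]
with implied constant depending only on $n$, $p$ and $\beta$, which is Claim \ref{SubdividingDyadicCube_Bound}.

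The step I expect to need the most care is the first one: $\pi$ must be defined so that the fibers $\pi^{-1}(P)$ stay inside $P$ while at the same time $\|g\|_{L^p,Q}$ stays comparable to $\|g\|_{L^p,\pi(Q)}$ for every $Q$ in the fiber. Routing the assignment through the level sets $\mathcal{C}_k$ and the maximal collections $\mathcal{S}_k$ attached to the particular constant $c=2^{(n+1)/p}$ of Claim \ref{SparseLemma} is exactly what makes both of these hold simultaneously. Everything after that is bookkeeping and the elementary geometric-series estimate, which converges precisely because $\beta>-1$.
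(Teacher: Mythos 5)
Your proposal is correct and follows essentially the same route as the paper: bound $\|g\|_{L^p,Q}\le c\,c^{k}$ on each level set $\mathcal{C}_k$, assign each cube to the unique containing cube of $\mathcal{S}_k$, sum the inner contribution over the dyadic subcubes of that cube via the geometric series (convergent precisely because $\beta>-1$), and reinstate $\|g\|_{L^p,P}>c^{k}$. Your packaging through the assignment map $\pi$ is just a cleaner bookkeeping of the paper's double sum over $k\in\Z$ and $P\in\mathcal{S}_k$, and if anything handles the possible multiplicity of a cube appearing at several levels slightly more transparently.
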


\begin{proof}[Proof of Claim \ref{SubdividingDyadicCube_Bound}]
   We see that each $Q \in \mathcal{C}_k$ is contained by some unique cube in $\mathcal{S}_k$.  With this we have
    \begin{align*}
        \sum_{Q \in \mathcal{D}} |Q|^{\frac{3}{2} + \frac{\beta}{2}}\|g\|_{L^{p}, Q} \cdot f_Q \leq{}&
        c \sum_{k \in \Z} c^k \sum_{Q \in \mathcal{C}_k} |Q|^{\frac{3}{2}+\frac{\beta}{2}}\cdot f_Q\\
        \leq{}& c \sum_{k \in \Z} c^k \sum_{P \in \mathcal{S}_k} \sum_{Q \in D(P)} |Q|^{\frac{3}{2}+\frac{\beta}{2}}\cdot f_Q.
    \end{align*}
    Taking a closer look at our inner sum give us
    \begin{align*}
        \sum_{Q \in D(P)} |Q|^{\frac{3}{2}+\frac{\beta}{2}}\cdot f_Q ={}& \sum_{j=0}^\infty \sum_{l(Q) = 2^{-j}l(P)} |Q|^{\frac{3}{2}+\frac{\beta}{2}}\cdot f_Q\\
        ={}& \sum_{j=0}^\infty \sum_{l(Q) = 2^{-j}l(P)}|Q|^{\frac{1}{2}+\frac{\beta}{2}} \int_Q f\\
        ={}& |P|^{\frac{1}{2}+\frac{\beta}{2}} \sum_{j=0}^\infty 2^{-(1+\beta)j}\int_P f\\
        ={}& C_\beta |P|^{\frac{3}{2} + \frac{\beta}{2}}\cdot f_P.
    \end{align*}
    We combine the above statements and note by Lemma \ref{SparseLemma} that $\mathcal{S} = \bigcup_k \mathcal{S}_k$ is sparse to finish the proof.
    \begin{align*}
        \sum_{Q \in \mathcal{D}} |Q|^{\frac{3}{2} + \frac{\beta}{2}}\|g\|_{L^{p}, Q} \cdot f_Q 
        \leq{}& c \sum_{k \in \Z} c^k \sum_{P \in \mathcal{S}_k} \sum_{Q \in D(P)} |Q|^{\frac{3}{2}+\frac{\beta}{2}}\cdot f_Q\\
        \leq{}& c C_\beta \sum_{k \in \Z} c^{k} \sum_{P \in \mathcal{S}_k} |P|^{\frac{3}{2} + \frac{\beta}{2}} \cdot f_P\\
        \leq{}& c C_\beta \sum_{Q \in \mathcal{S}} |Q|^{\frac{3}{2} + \frac{\beta}{2}} \|g\|_{L^{p},Q}\cdot f_Q.
    \end{align*}
\end{proof}

For the following inequalities we apply H\"{o}lder's inequality to \eqref{epsilon_inequality}, redistribute some powers of the norms of our cubes to bring out the Campanato norm bound, and apply Claim \ref{SubdividingDyadicCube_Bound} to achieve the necessary sparse bound.  In the case $\beta = 0$ we have that $\lambda = 2$ and so $[b]_{\mathscr{L}^{\phi,\lambda}} = \|b\|_{BMO}$. Finally, noting that
\[ \frac{3}{2} + \frac{\beta}{2} = \frac{1}{p} + \frac{1}{q'}, \]
we have
\begin{align*}
    I_1 \leq{}& (2^n a^l)^\epsilon \sum_{Q \in \mathcal{D}} \left( \fint_Q |b- b_Q|^\phi \right)^\frac{1}{\phi} \|g_l^{1-\epsilon}\|_{\phi',Q}\cdot f_Q\\
    \leq{}& (2^n a^l)^\epsilon \sum_{Q \in \mathcal{D}} |Q|^{\frac{3}{2} - \frac{1}{\phi} + \frac{\lambda}{2\phi}} \left( \frac{1}{|Q|^\frac{\lambda}{2}}\int_Q |b-b_Q|^\phi \right)^\frac{1}{\phi} \|g_l^{1-\epsilon}\|_{\phi',Q}\cdot f_Q\\
    \leq{}& (2^n a^l)^\epsilon [b]_{\mathscr{L}^{\phi,\lambda}} \sum_{Q \in \mathcal{D}} |Q|^{\frac{3}{2} + \frac{\beta}{2}}\|g_l^{1-\epsilon}\|_{\phi',Q}\cdot f_Q\\
    \leq{}& (2^n a^l)^\epsilon [b]_{\mathscr{L}^{\phi,\lambda}} \sum_{Q \in \mathcal{S}} |Q|^{\frac{1}{p} + \frac{1}{q'}}\|g_l^{1-\epsilon}\|_{\phi',Q}\cdot f_Q.
\end{align*}

We now show that the above summation is finite.  Let $1 < A < B < q$ satisfy
\[ \frac{1}{A} + \frac{1}{B'} = \frac{1}{p} + \frac{1}{q'}. \]
Also choose an $s$ between $A$ and $B$ and define
\[ \frac{\eta}{2} := \frac{1}{B'} - \frac{1}{s'} \quad \text{and} \quad \frac{\gamma}{2} := \frac{1}{A} - \frac{1}{s}. \]
With this we have
\begin{align*}
    I_1 \leq{}& (2^n a^l)^\epsilon \sum_{Q \in \mathcal{S}} \left( |Q|^\frac{\eta \phi}{2} \fint_Q g_l^{(1-\epsilon)\phi}\right)^\frac{1}{\phi} \left( |Q|^\frac{\gamma}{2} \fint_Q f\right)|Q|\\
    \leq{}& (2^n a^l)^\epsilon \int_\C (M_{\eta \phi}(g_l^{(1-\epsilon)\phi}(x)))^{1/{\phi}}M_\gamma f(x)\,dx\\
    \leq{}& (2^n a^l)^\epsilon \|M_{\eta \phi}(g_l^{(1-\epsilon)\phi})\|_{L^{s'/{\phi}}}^{1/{\phi}}\|M_\gamma f\|_{L^s}\\
    \leq{}& (2^n a^l)^\epsilon \|g_l^{1-\epsilon}\|_{L^{B'/{\phi}}}^{1/{\phi}}\|f\|_{L^A}\\
    \leq{}& (2^n a^l)^\epsilon \|g_l^{1-\epsilon}\|_{L^{B'}}\|f\|_{L^A}.
\end{align*}
Both integrals are finite, thus $I_1$ tends to $0$ as $l \ra -\infty$.

Returning to the sum in \eqref{main_commutator_decomp}, we again break up our integral:
\begin{align*}
    \sum_{k=-\infty}^{m-1} \int_\C C_bf(x)(b_k(x) - b_{k+1}(x))\,dx ={}&
    \sum_{k,j \in \Z} \int_{Q_j^k} C_b f(x)(b_k(x) - b_{k+1}(x))\,dx\\
    ={}& \sum_{k,j \in \Z} \int_{Q_j^k}Cf(x)(b(x) - b_{Q_j^k})(b_k(x) - b_{k+1}(x))\,dx\\
    {}& \, - \sum_{k,j \in \Z} \int_{Q_j^k}C(f(b - b_{Q_j^k}))(x)(b_k(x) - b_{k+1}(x))\,dx\\
    ={}& II_1 + II_2.
\end{align*}
For $II_1$ we first use H\"{o}lder's inequality and the norm associated with our Campanato space.
\begin{align*}
    |II_1| \leq{}& \sum_{k,j \in \Z}u_{Q_j^k} \int_{Q_j^k} Cf(x)(b - b_{Q_j^k})\,dx\\
    \leq{}& \sum_{k,j \in \Z} u_{Q_j^k} \left( \int_{Q_j^k} Cf(x)^{\phi'}\,dx\right)^\frac{1}{\phi'}\left( \int_{Q_j^k} |b(x) - b_{Q_j^k}|^{\phi}\, dx \right)^\frac{1}{\phi}\\
    \leq{}& [b]_{\mathscr{L}^{\phi,\lambda}} \sum_{k,j \in \Z}  |Q_j^k|^\frac{\lambda}{2\phi} u_{Q_j^k} \left( \int_{Q_j^k} Cf(x)^{\phi'}\right)^\frac{1}{\phi'}.\\
    \intertext{Now apply Kolmogorov's inequality with $\phi' < 2$ and the standard weak norm bound for the Cauchy transform, $C:L^1 \ra L^{2,\infty}$.}
    \leq{}& [b]_{\mathscr{L}^{\phi,\lambda}} \sum_{k,j \in \Z}  |Q_j^k|^{\frac{1}{2} + \frac{\beta}{2}} u_{Q_j^k} \|Cf\|_{L^{2,\infty}}\\
    \leq{}& [b]_{\mathscr{L}^{\phi,\lambda}} \sum_{k,j \in \Z}  |Q_j^k|^{\frac{3}{2} + \frac{\beta}{2}} u_{Q_j^k} f_{Q_j^k}.
\end{align*}
Pick an $s$ between $p$ and $q$ and define
\begin{equation}\label{Eta_Gamma_Defns}
    \frac{\eta}{2} := \frac{1}{q'} - \frac{1}{s'} \quad \text{and} \quad \frac{\gamma}{2} := \frac{1}{p} - \frac{1}{s}.
\end{equation}
Noting that the set $\{Q_j^k\}_{k,j}$ is sparse, we have
\begin{align*}
    |II_1| \lesssim{}& \sum_{k,j \in \Z} \left( |Q_j^k|^\frac{\eta}{2} \fint_{Q_j^k} u(x)\, dx \right) \left( |Q_j^k|^\frac{\gamma}{2} \fint_{Q_j^k} f(x)\,dx\right) |E_{Q_j^k}|\\
    \leq{}& \int_\C M_\eta u(x) M_\gamma f(x)\,dx\\
    \leq{}& \|M_\eta u\|_{L^{s'}}\|M_\gamma f\|_{L^s}\\
    \lesssim{}& \|f\|_{L^p}.
\end{align*}

For $II_2$ we begin similarly to the argument for $II_1$, although we now use Kolmogorov's inequality with $1 < 2$:
\begin{align*}
    |II_2| \leq{}& \sum_{k,j} u_{Q_j^k} \int_{Q_j^k} |C(f(\cdot)(b(\cdot) - b_{Q_j^k}))(x)|\,dx\\
    \lesssim{}& \sum_{k,j} |Q_j^k|^\frac{1}{2} u_{Q_j^k} \int_{Q_j^k} |f(x)(b(x) - b_{Q_j^k})|\,dx \\
    \leq{}& [b]_{\mathscr{L}^{\phi,\lambda}} \sum_{k,j} |Q_j^k|^{\frac{3}{2} + \frac{\beta}{2}} u_{Q_j^k} \|f\|_{L^{\phi'},Q_j^k}.
\end{align*}
We use \eqref{Eta_Gamma_Defns} again, replacing one equation with the equivalent version
\[ \frac{\gamma \phi'}{2} = \frac{1}{\frac{p}{\phi'}} = \frac{1}{\frac{s}{\phi'}}. \]
It is here that our use of maximal operators and the above equation necessitates the strict inequality $p > \phi'$.  We then have
\begin{align*}
    |II_2| \lesssim{}& \sum_{k,j} \left( |Q_j^k|^\frac{\gamma \phi'}{2} \fint_{Q_j^k} f(x)^{\phi'}\,dx\right)^\frac{1}{\phi'} \left( |Q_j^k|^\frac{\eta}{2} \fint_{Q_j^k} u(x)\,dx \right) |E_{Q_j^k}|\\
    \leq{}& \int_\C (M_{\gamma \phi'} (f^{\phi'})(x))^\frac{1}{\phi'} M_\eta u(x)\,dx\\
    \leq{}& \|M_{\gamma \phi'}(f^{\phi'})\|_{L^\frac{s}{\phi'}}^\frac{1}{\phi'}\|M_\eta u\|_{L^{s'}}\\
    \lesssim{}& \|f^{\phi'}\|_{L^\frac{p}{\phi'}}^\frac{1}{\phi'}\|u\|_{L^{q'}}\\
    \leq{}& \|f\|_{L^p}.
\end{align*}
This completes the sufficient conditions.

Turning to necessity, suppose that $C_b:L^p(\C) \ra L^q(\C)$.  For all $\beta \geq 0$ cases, it suffices to show that
\begin{equation}\label{necessary_condition_ineq_beta_geq0}
    \fint_Q |b(x) - b_Q|\,dx \lesssim |Q|^\frac{\beta}{2}
\end{equation}
for a cube $Q$ (see Theorem 2.4.1 in \cite{MR4338459}).  Letting $c$ be the center of $Q$, we define the functions
\[ f_Q(x) := \text{sgn}(b(x) - b_Q)\chi_Q(x) \quad \text{and} \quad g_Q(x) := \frac{x-c}{|Q|}\chi_Q(x). \]
We have
\begin{align}
\begin{split}\label{NecessaryConditionSplitting}
    \fint_Q|b(x) - b_Q|\,dx ={}& \frac{1}{|Q|}\int_{\C} (b(x) - b_Q)f_Q(x)\,dx\\
    ={}& \frac{1}{|Q|}\int_{\C} \fint_Q (b(x) - b(y))f_Q(x)\,dydx\\
    ={}& \frac{1}{|Q|} \int_{\C} \int_Q \left(\frac{b(x) - b(y)}{x-y} \cdot \frac{x-y}{|Q|}\right)f_Q(x)\,dydx\\
    ={}& \frac{1}{|Q|}\int_{\C}C_b(\chi_Q)(x)f_Q(x)g_Q(x)\,dx\\
    {}& \quad - \frac{1}{|Q|} \int_{\C}C_b(g_Q)(x)f_Q(x)\,dx\\
    ={}& III_1 + III_2.
\end{split}
\end{align}
For $III_1$ we use H\"{o}lder's inequality and the assumed norm inequality to get
\begin{align*}
    |III_1| \leq{}& |Q|^{-1}\|C_b(\chi_Q)\|_{L^q} \|f_Q g_Q\|_{L^{q'}}\\
    \leq{}& |Q|^{-1}\|\chi_Q\|_{L^p}\|g_Q\|_{L^{q'}}\\
    ={}& |Q|^{\frac{1}{p} + \frac{1}{q'} - \frac{3}{2}}\\
    ={}& |Q|^\frac{\beta}{2}.
\end{align*}
By the same argument we have $|III_2| \lesssim |Q|^\frac{\beta}{2}$.  

For the $\beta < 0$ case we now use
\[ f_Q(x) = |b(x)-b_Q|^{\phi - 1}\text{sgn}(b(x)-b_Q)\chi_Q(x) \quad \text{and} \quad g_Q(x) = \frac{x-c}{|Q|}\chi_Q(x). \]
Similarly to \eqref{NecessaryConditionSplitting}, we have
\begin{equation}
\begin{split}\label{Campanato_start_inequality}
    \int_Q |b(x) - b_Q|^\phi \,dx \lesssim{}& \int_\C C_b(\chi_Q)(x)g_Q(x)f_Q(x)\,dx - \int_{\C}C_b(g_Q)(x)f_Q(x)\,dx\\
    ={}& IV_1 + IV_2.
\end{split}
\end{equation}
We show the subsequent inequalities for $IV_1$ and note that an identical inequality arises in $IV_2$.  Using H\"{o}lder's inequality and the assumed norm inequality yields
\begin{equation}\label{f_Q-inequality}
    |IV_1| \lesssim \|C_b(\chi_Q)\|_{L^q}\|f_Q g_Q\|_{L^{q'}} \lesssim |Q|^{\frac{1}{p} - \frac{1}{2}} \|f_Q\|_{L^{q'}} = |Q|^\frac{\lambda}{2\phi} \left( \int_Q |b(x) - b_Q|^\phi \right)^\frac{1}{\phi'}.
\end{equation}
Here we use the fact that $\phi = q$ and so
\begin{equation}
    \|f_Q\|_{L^{q'}} = \left( \int_Q |b(x) - b_Q|^{(\phi - 1)q'} \right)^\frac{1}{\phi'} = \left( \int_Q |b(x) - b_Q|^\phi \right)^\frac{1}{\phi'}.
\end{equation}
In the case that $\phi = p'$, we can use the same argument by replacing $q$ with $p'$, and the norm bound $C_b:L^p \ra L^q$ with $C_b:L^{q'} \ra L^{p'}$.  Therefore we have that \eqref{Campanato_start_inequality} and \eqref{f_Q-inequality} give us
\[ \left( \frac{1}{|Q|^\frac{\lambda^*}{2}} \int_Q |b(x) - b_Q|^\phi \,dx \right)^\frac{1}{\phi} \lesssim 1, \]
hence $b \in \mathscr{L}^{\phi, \lambda}$.

\qed

\section{Sufficient Condition with \texorpdfstring{$p'=q$}{p'=q}}\label{partialresult_section}

In this section we present sufficient conditions for $C_b:L^p(\C) \ra L^q(\C)$ when $p' = q$.  We first dispense with definitions and useful theorems.  A Young function $A: [0, \infty) \ra [0, \infty)$ is an increasing, convex function that satisfies $A(0) = 0$ and $A(t) /t \ra \infty$ as $t \ra \infty$.  Given a Young function $A$, the Orlicz average of $f$ over a cube $Q$ is defined as
\[ \|f\|_{A,Q} = \inf \left\{ \lambda > 0 \,:\, \fint_Q A\left( \frac{|f(x)|}{\lambda} \right)\, dx \leq 1 \right\}. \]
This quantity is indeed a norm.  If $A(t) = t^p$ for some $p > 1$ then
\begin{equation}\label{YoungFunctObservation}
    \|f\|_{A,Q} = \|f\|_{L^p,Q} = |Q|^{-\frac{1}{p}}\|f \chi_Q\|_{L^p}.
\end{equation}
For each Young function $A$, there exists an associate Young function $\overline{A}$ that satisfies
\[ t \leq A^{-1}(t)\overline{A}^{-1}(t) \leq 2t. \]
We will use the following properties of Young functions.  Proofs of the following propositions are widely available; see \cite{MR1890178} and \cite{MR0928802} for further insight on Young functions.  Given $A$ and $B$ Young functions, we write $A \lesssim B$ if there exists some $t_0 > 0$ such that $A(t) \leq C B(t)$ for all $t \geq t_0$.
\begin{prop}\label{YoungFuncts_ineq}
    Let $A,B$ be Young functions satisfying $A \lesssim B$.  Then there exists a constant $C$, depending only on $A$ and $B$, such that for every cube $Q$ and measurable function $f$,
    \[ \|f\|_{A,Q} \leq C \|f\|_{B,Q}. \]
\end{prop}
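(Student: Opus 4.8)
The plan is to prove Proposition \ref{YoungFuncts_ineq} by reducing everything to the definition of the Orlicz average via the usual scaling-and-homogeneity trick. Recall that $\|f\|_{A,Q}$ is the infimum of those $\lambda>0$ for which $\fint_Q A(|f|/\lambda)\le 1$; the key observation is that this infimum is attained (or at least approached) when the average equals $1$, since $A$ is increasing and convex with $A(0)=0$. So after normalizing we may assume $\|f\|_{B,Q}=1$, i.e. $\fint_Q B(|f|/\|f\|_{B,Q})\le 1$, and it suffices to produce a fixed constant $C$ (depending only on $A,B$) with $\fint_Q A(|f|/C)\le 1$, because that forces $\|f\|_{A,Q}\le C$. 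By homogeneity of the norm in $f$ this normalization is harmless.

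First I would spell out the hypothesis $A\lesssim B$: there exist $t_0>0$ and $C_1\ge 1$ with $A(t)\le C_1 B(t)$ for all $t\ge t_0$. The issue is that this controls $A$ by $B$ only in the tail; near the origin we instead use that $A$ is increasing, so $A(t)\le A(t_0)$ for $0\le t\le t_0$. Combining, for \emph{all} $t\ge 0$ we have $A(t)\le C_1 B(t)+A(t_0)$. Next I would use convexity of $A$ together with $A(0)=0$ to get the subhomogeneity bound $A(t/c)\le \tfrac{1}{c}A(t)$ for any $c\ge 1$; this lets us absorb constants inside the argument of $A$ at the cost of a reciprocal factor outside. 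Applying this with the function $g:=|f|/\|f\|_{B,Q}$, which satisfies $\fint_Q B(g)\le 1$, I would estimate, for a constant $c\ge 1$ to be chosen,
\[
\fint_Q A\!\left(\frac{g}{c}\right)\le \frac{1}{c}\fint_Q A(g)\le \frac{1}{c}\left(C_1\fint_Q B(g)+A(t_0)\right)\le \frac{C_1+A(t_0)}{c}.
\]
Choosing $c=C_1+A(t_0)$ (which is $\ge 1$) makes the right-hand side $\le 1$, hence $\|g\|_{A,Q}\le c$, i.e. $\|f\|_{A,Q}\le c\,\|f\|_{B,Q}$. Since $c$ depends only on $A$ and $B$ (through $t_0$ and $C_1$), this is the desired inequality with $C=c$.

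The one point that needs a little care — and the only genuine obstacle — is the passage from ``$\fint_Q A(|f|/C)\le 1$'' to ``$\|f\|_{A,Q}\le C$'': this is exactly the statement that the infimum defining $\|f\|_{A,Q}$ is $\le C$, which is immediate from the definition of infimum, so there is really nothing to prove there; similarly ``$\fint_Q B(|f|/\|f\|_{B,Q})\le 1$'' holds because $A$ (hence $B$) is continuous and the infimand is a continuous decreasing function of $\lambda$ tending to $0$ as $\lambda\to\infty$ — so it is $\le 1$ at $\lambda=\|f\|_{B,Q}$ by right-continuity. If one wants to be scrupulous, replace $\|f\|_{B,Q}$ by $(1+\varepsilon)\|f\|_{B,Q}$ throughout, run the argument, and let $\varepsilon\downarrow 0$ at the end. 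I would also note the degenerate cases: if $\|f\|_{B,Q}=0$ then $f=0$ a.e.\ on $Q$ and both sides vanish; if $\|f\|_{B,Q}=\infty$ there is nothing to prove. This handles all cases and completes the proof.
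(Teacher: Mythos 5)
Your proof is correct. The paper itself gives no proof of Proposition \ref{YoungFuncts_ineq}, deferring to the standard references, and your argument is exactly the classical one found there: control $A(t)$ by $A(t_0)$ for $t\le t_0$ and by $C_1B(t)$ for $t\ge t_0$, then use the convexity bound $A(t/c)\le A(t)/c$ (valid for $c\ge1$ since $A(0)=0$) to absorb the constant $c=C_1+A(t_0)$ into the argument, so that $\fint_Q A\bigl(|f|/(c\|f\|_{B,Q})\bigr)\le 1$ and the Luxemburg definition gives $\|f\|_{A,Q}\le c\,\|f\|_{B,Q}$, with $c$ independent of $Q$ because the averages are normalized. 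The only step stated loosely is the claim $\fint_Q B\bigl(|f|/\|f\|_{B,Q}\bigr)\le 1$, whose ``right-continuity'' justification really rests on monotone convergence; but your $(1+\varepsilon)$-normalization fallback handles this rigorously, so the argument stands as written.
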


\begin{prop}\label{Holder_YoungFuncts}
    Let $A$ be a Young function.  Then for every cube $Q$ and measurable functions $f$ and $g$,
    \[ \fint_Q |f(x)g(x)|\,dx \lesssim \|f\|_{A,Q}\|g\|_{\overline{A},Q}. \]
\end{prop}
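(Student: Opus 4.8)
The plan is to reduce to the case where the right-hand side is exactly $1$ by homogeneity. First I would dispose of the trivial cases: if $\|f\|_{A,Q} = 0$ then $f = 0$ a.e.\ on $Q$ and both sides vanish, and similarly for $g$; if either average is infinite the inequality is automatic. So assume $0 < \|f\|_{A,Q} < \infty$ and $0 < \|g\|_{\overline{A},Q} < \infty$, and after dividing $f$ by $\|f\|_{A,Q}$ and $g$ by $\|g\|_{\overline{A},Q}$ it suffices to prove that if $\fint_Q A(|f|)\,dx \leq 1$ and $\fint_Q \overline{A}(|g|)\,dx \leq 1$, then $\fint_Q |fg|\,dx \leq C$ for an absolute constant $C$ (which will turn out to be $2$).

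The key step is the pointwise Young-type inequality $st \leq A(s) + \overline{A}(t)$ for all $s,t \geq 0$, valid for any Young function $A$ and its complementary function $\overline{A}$. This is the standard consequence of the duality $A(s) = \sup_{t \geq 0}(st - \overline{A}(t))$; alternatively, one derives it from the stated normalization $t \leq A^{-1}(t)\overline{A}^{-1}(t) \leq 2t$ (for instance, applying it with $t$ replaced by $A(s)\,\overline{A}(t)$ after a short manipulation, or via the geometric description of $\overline{A}$ as the Legendre transform). Applying this with $s = |f(x)|$, $t = |g(x)|$ and integrating the average over $Q$ gives
\[
\fint_Q |f(x)g(x)|\,dx \;\leq\; \fint_Q A(|f(x)|)\,dx + \fint_Q \overline{A}(|g(x)|)\,dx \;\leq\; 1 + 1 = 2.
\]
Undoing the normalization, $\fint_Q |fg|\,dx \leq 2\,\|f\|_{A,Q}\,\|g\|_{\overline{A},Q}$, which is the claimed inequality (with the implicit constant equal to $2$). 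One small technical point: the definition of $\|f\|_{A,Q}$ uses a non-strict inequality $\fint_Q A(|f|/\lambda)\,dx \leq 1$, and since $A$ is convex hence continuous on $[0,\infty)$ and $A(0)=0$, monotone convergence shows the infimum $\lambda_0 = \|f\|_{A,Q}$ is actually attained, i.e.\ $\fint_Q A(|f|/\lambda_0)\,dx \leq 1$; this justifies the normalization step cleanly.

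The main obstacle here is essentially bookkeeping rather than depth: one must be careful that the complementary function $\overline{A}$ satisfying the two-sided bound $t \leq A^{-1}(t)\overline{A}^{-1}(t) \leq 2t$ is (equivalent to) the genuine Young conjugate for which $st \leq A(s)+\overline{A}(t)$ holds, since the proposition is stated in terms of the former. If one wants to avoid invoking the Legendre-transform characterization as a black box, the cleanest route is: for fixed $x$ write $s=|f(x)|$, $t=|g(x)|$, and if $s = A^{-1}(u)$, $t = \overline{A}^{-1}(v)$ then $st = A^{-1}(u)\overline{A}^{-1}(v) \leq A^{-1}(\max\{u,v\})\overline{A}^{-1}(\max\{u,v\}) \leq 2\max\{u,v\} \leq 2(u+v) = 2(A(s)+\overline{A}(t))$, using monotonicity of $A^{-1},\overline{A}^{-1}$ and the upper bound $A^{-1}(r)\overline{A}^{-1}(r)\leq 2r$; this yields $\fint_Q|fg| \leq 2\big(\fint_Q A(|f|)+\fint_Q\overline{A}(|g|)\big) \leq 4$, hence the conclusion with implicit constant $4$, which is all that is needed since the statement only asserts $\lesssim$.
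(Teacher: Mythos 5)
Your proof is correct; the paper itself does not prove this proposition but defers to the standard references, and your argument (normalize so that $\fint_Q A(|f|)\,dx\le 1$ and $\fint_Q \overline{A}(|g|)\,dx\le 1$, then apply the pointwise Young inequality $st\le A(s)+\overline{A}(t)$ and integrate) is exactly the classical proof found there. Your closing remark is also the right technical care: since the paper only specifies $\overline{A}$ through $t\le A^{-1}(t)\overline{A}^{-1}(t)\le 2t$, deriving the substitute bound $st\le 2\bigl(A(s)+\overline{A}(t)\bigr)$ directly from the upper half of that normalization (via monotonicity of the inverses) cleanly avoids assuming $\overline{A}$ is literally the Legendre conjugate, at the harmless cost of a constant $4$ instead of $2$ — which is all the stated $\lesssim$ requires.
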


A Young function $A$ is satisfies the $B_{p,q}$ condition, $1 < p \leq q < \infty$, if
\[ \int_1^\infty \frac{A(t)^\frac{q}{p}}{t^q} \frac{dt}{t} < \infty. \]
Cruz-Uribe and Moen in \cite{MR3065302} show that given $\frac{\beta}{n} = \frac{1}{p} - \frac{1}{q}$, then the fractional Orlicz maximal operator
\[ M_{\beta,A}f(x) = \sup_{Q \ni x} |Q|^\frac{\beta}{n}\|f\|_{A,Q} \]
satisfies $M_{\beta,A}:L^p(\R^n) \ra L^q(\R^n)$ for any $A \in B_{p,q}$.

\begin{defn}
    Given $1 \leq p < \infty$ and $\lambda \geq 0$, the Orlicz-Campanato space $\mathcal{O}^{p,\lambda}(\R^n)$ is defined as
    \[ \mathcal{O}^{A,\lambda} = \{ f \in L^1_{\text{loc}}(\R^n) \,:\, [f]_{A, \lambda} < \infty\} \]
    where the Orlicz-Campanato seminorm is
    \[ [f]_{A, \lambda} = \sup_{x \in \R^n; Q \ni x}\left( |Q|^{1 - \frac{\lambda}{n}} \|b-b_{Q}\|_{A,Q}\right). \]
\end{defn}
This is a simplified version of the generalized Orlicz-Morrey space.  See \cite{MR4181731} for more on this space, where the authors studied boundedness of commutators on these spaces.  Also note that for any $r > \phi$, $\mathcal{O}^{A,\beta+2} \subseteq \mathscr{L}^{r,\lambda}$, since for any cube $Q$,
\begin{align*}
    |Q|^{1 - \frac{\beta+2}{2}}\|b-b_Q\|_{A,Q} ={}& |Q|^{-\frac{\beta}{2}}\|b - b_Q\|_{A,Q}\\
    \lesssim{}& |Q|^{-\frac{\beta}{2}}\|b -b_Q\|_{L^r, Q}\\
    ={}& \left( \frac{1}{|Q|^\frac{\lambda}{2}} \int_Q |b(x) - b_Q|\,dx\right)^\frac{1}{r}
\end{align*}
We now present our sufficiency result in the case $p' = q$.

\begin{proof}[Proof of Theorem \ref{supplementary_sufficiency_result}]
    Using Lemma \ref{Fractional_linearcombo} and duality suffices to show that
    \begin{equation}\label{dual_inequality}
        \int_{\C} C_b^\mathcal{D}f(x)g(x)\,dx \lesssim \|f\|_{L^p(\C)}
    \end{equation}
    given a dyadic grid $\mathcal{D}$ and $\|g\|_{L^{p}(\C)} = 1$.  Since $C_b^\mathcal{D}$ is a positive operator we assume without loss of generality that $f$ and $g$ are non-negative.  As before we break up our integral into two better-suited ones.
    \begin{align*}
        \int_{\C} C_{b}^\mathcal{D}f(x)g(x)\,dx ={}&
        \sum_{Q \in \mathcal{D}} |Q|^{\frac{1}{2} + 1} \fint_Q \fint_Q |b(x) - b(y)|f(y)g(x)\,dydx\\
        \leq{}& \sum_{Q \in \mathcal{D}} |Q|^{\frac{3}{2}} \left( \fint_Q |b(x) - b_Q|g(x)\,dx\right) \cdot f_Q\\
        {}& \quad + \sum_{Q \in \mathcal{D}} |Q|^{\frac{3}{2}} \left( \fint_Q |b(y) - b_Q|f(y)\,dy \right)\cdot g_Q\\
        ={}& I + II.
    \end{align*}
    We show the desired inequality for $I$ and indicate how slight modifications lead to the same inequality for $II$.  Using Theorem \ref{Holder_YoungFuncts} we have
    \begin{align*}
        I \leq{}& \sum_{Q \in \mathcal{D}}|Q|^\frac{3}{2} \|b-b_Q\|_{A,Q}\|g\|_{\overline{A},Q}f_Q\\
        ={}& \sum_{Q \in \mathcal{D}}|Q|^{\frac{3}{2}+\frac{\beta}{2}} \left( |Q|^{-\frac{\beta}{2}} \|b-b_Q\|_{A,Q} \right)\|g\|_{\overline{A},Q}f_Q\\
        \leq{}& [b]_{A,\beta + 2} \sum_{Q \in \mathcal{D}}|Q|^{\frac{3}{2}+\frac{\beta}{2}} \|g\|_{\overline{A},Q}f_Q
    \end{align*}
    
    With slight modifications to Claims \ref{SparseLemma} and \ref{SubdividingDyadicCube_Bound}, we have that our sum over a dyadic grid is bounded by a sum over a sparse subset.  We adopt $s$, $\eta$, and $\gamma$ from \eqref{Eta_Gamma_Defns} (with $q' = p$), and note that by standard calculations on Young functions we have that
    \[   \overline{A}(t) \simeq \frac{t^p}{\log(e+t)^{1+\delta}} \in B_{p,s'}. \]
    Putting it all together, we have our desired result.
    \begin{align*}
        |I| \lesssim{}& \sum_{Q \in \mathcal{S}} |Q|^{\frac{3}{2}+\frac{\beta}{2}}\|g\|_{\overline{A},Q}f_Q\\
        \lesssim{}& \sum_{Q \in \mathcal{S}} |Q|^\frac{\eta}{2} \|g\|_{\overline{A},Q} |Q|^\frac{\gamma}{2} f_Q |E_Q|\\
        \leq{}& \int_{\C} M_{\eta, \overline{A}}g(x) M_{\gamma}f(x)\,dx\\
        \leq{}& \|M_{\eta, \overline{A}}g\|_{L^{s'}}\|M_{\gamma}f\|_{L^s}\\
        \leq{}& \|g\|_{L^p}\|f\|_{L^p}.
    \end{align*}
    For $II$ we have a nearly identical argument.  Switching $f$ and $g$, using that $\overline{A}(t) \in B_{p,s}$, and applying $\eqref{Eta_Gamma_Defns}$ accordingly gives us $|II| \lesssim [b]_{A,\beta + 2}\|g\|_{L^p}\|f\|_{L^p}$.
\end{proof}

\subsection*{Acknowledgement}
The author is grateful to Kabe Moen and David Cruz-Uribe for their invaluable guidance and availability for discussions.

\bibliographystyle{plain}
\bibliography{mybiblio}

\end{document}